\newtheorem{theorem}{Theorem}[section]
\newtheorem{lemma}[theorem]{Lemma}
\newtheorem{proposition}[theorem]{Proposition}
\DeclareMathOperator{\e}{e}
\newcommand{\greensqr}{\textcolor{green}{\rule{0.8em}{0.8em}}}
\newcommand{\redsqr}{\textcolor{red}{\rule{0.8em}{0.8em}}}
\newcommand{\bluesqr}{\textcolor{blue}{\rule{0.8em}{0.8em}}}
\journal{~}
\begin{document}

\begin{frontmatter}
\title{Double Allee effect induced extinction and bifurcation in a discrete predator-prey model}

\author[label1]{Rajesh Ranjan Patra\corref{cor1}}
\ead{rajeshp911@yahoo.com}
\author[label1]{Sarit Maitra}

\address[label1]{Department of Mathematics, NIT Durgapur, Durgapur-713209, India}
\cortext[cor1]{Corresponding author}

\begin{abstract}
The importance of the Allee effect in studying extinction vulnerability is widely recognized by researchers, and neglecting it could adversely impact the management of threatened or exploited populations \cite{berec2007}. In this article, we examine a discrete predator-prey model where the prey population is associated with two component Allee effects. We derive sufficient conditions for the existence and local stability nature of the fixed points of the system. The occurrence of Neimark-Sacker bifurcation is established, and sufficient conditions are obtained along with the normal form. Numerically, we demonstrate that the system exhibits Neimark-Sacker bifurcation for various system parameters. Additionally, the numerical simulations indicate that certain system parameters have threshold values, above or below which the populations are driven to extinction due to the effect of the double Allee effect.

\begin{keyword}
Holling type II \sep Double Allee effect \sep Neimark-Sacker bifurcation \sep Extinction
\end{keyword}

\end{abstract}
\end{frontmatter}

\section{Introduction}
Interspecific relations are abundant in nature, and they may significantly contribute to the fitness of interacting species. One such phenomenon that is associated with the individual fitness of a population is the Allee effect, which is widely popular among ecologists. It establishes a direct relationship between population density and individual fitness, which was first observed by W. C. Allee \cite{allee1931} in an aquatic system. The positive density dependence influences the growth rate of a population when there is an increase in the population density, as observed in goldfish (\emph{Carassius auratus}) population \cite{allee1931} and Argentine ants (\emph{Linepithema humile}) \cite{giraud2002,anuglo2018}. This correlation becomes more relevant in low population levels since it can lead to a decline in population growth rates, as observed in desert bighorn sheep \cite{mooring2004} and African wild dogs \cite{courchamp2002}. It also suggests that there is a minimum threshold population size below which a species may struggle to survive or reproduce. Hunting cooperation, mate-finding ability, and enhanced protection from predators in larger populations are some of the various factors that contribute to the Allee effect \cite{kuussaari1998, gascoigne2009, alves2017}. 

In the context of the reproduction-predation risk trade-off, both reproductive success and the probability of avoiding predation may show a positive correlation with population size or density, which gives rise to a component Allee effect \cite{courchamp1999, pavlova2010}. Allee effects at the component level are evident in individual fitness components, such as juvenile survival or litter size. In contrast, demographic Allee effects operate at the broader scale of overall mean individual fitness, typically observed through the population's demography, specifically the per capita population growth rate \cite{courchamp2008}. A demographic Allee effect can be categorized as either a strong or a weak Allee effect. In a strong Allee effect, the growth rate of a population becomes negative when its density falls below a certain threshold. However, in a weak Allee effect, the growth rate always stays positive, even at low population levels, but at a reduced rate. There are several instances of strong Allee effect in plant species \cite{groom1998}, insects \cite{berggren2001} and vertebrates \cite{angulo2007}. Examples of weak Allee effect include Smooth cordgrass (\emph{Spartina alterniflo}) \cite{taylor2004}. Other examples of Allee effects can be found in \cite{berec2007, kuussaari1998, kramer2009}

Allee effects are typically examined as single mechanisms of positive density dependence; however, there can be multiple Allee effects present in a single ecosystem. Certain examples include Island fox (\emph{Urocyon littoralis}) in  California Channel Islands \cite{angulo2007} and Alpine marmot (\emph{Marmota marmota}) \cite{stephens2002}. Many times, the interaction between these Allee effects can be complex \cite{berec2007, li2022}. Researchers combine two (or more) types of Allee effects to study its impact on the population dynamics. This is termed the double (or multiple) Allee effect. Historically, there have been various studies that examine the mathematical aspects of Allee effects on the population dynamics. 
In a predator-prey model with a double Allee effect, Feng and Kang \cite{feng2015} demonstrated that, while a strong Allee effect leads to the emergence of a basin of attraction for total extinction equilibrium, both populations can survive under a weak Allee effect. 
Considering one Allee effect on prey and one on predator, Xing et al. \cite{xing2023} obtained that the survival of prey is positively influenced by a strong Allee effect on the predator. 
These studies impose a component Allee effect on prey and predator species. 

The dynamics of a population with a single Allee effect is generally modelled by the well-known model \cite{kot2001,naik2022}
\begin{eqnarray*} 
\frac{dx}{dt}=rx\left(1-\frac{x}{K}\right)(x-m),
\end{eqnarray*}
where $0<m\leq K$ represents a strong Allee effect and $-K< m\leq 0$ represents a weak Allee effect \cite{gonzalez2011}. here, $r$ is the intrinsic growth rate and $K$ is the carrying capacity for the population $(x)$. As shown in \cite{zu2010}, sometimes the factor $\frac{rx}{x+n}$ is associated with the per capita growth term to represent Allee effect in the population. This term is introduced to deal with the effect of fertility rate on the growth of the population. Also, there are several other mathematical forms that have previously been used to model the Allee effect in a population \cite{boukal2002}.

In the last few decades, several researchers have studied the influence of double Allee effect in a single population, incorporating these two factors 
%
%
in the prey population in continuous and discrete models \cite{boukal2007, gonzalez2015, tiwari2021, xia2022}. 
A study by Boukal et al. \cite{boukal2007} shows that abrupt and deterministic collapses in the system can occur without prior fluctuations in predator–prey dynamics, typically observed in scenarios with steep type III functional responses and strong Allee effects. 
Gonzalez-Olivares \cite{gonzalez2015} studied a biomass-effort fishery model and obtained that the presence of double Allee effect in the fish population induces three potential attractors in the phase plane. This makes the population sensitive to disturbances, and this needs to be managed carefully. 
Mathematically, the population dynamics with double Allee effect is represented by
\begin{equation*} 
\frac{dx}{dt} = \frac{rx}{x+n}\left(1-\frac{x}{K}\right)(x-m).
\end{equation*}
The expression $\frac{rx}{x+n}$ can be seen as an approximation of population dynamics where distinctions between fertile and non-fertile individuals are not explicitly modelled. We can infer that this term signifies the influence of the Allee effect attributable to the non-fertile population $n$ \cite{courchamp2008, gonzalez2011}. Although, as stated earlier, the evidence of double Allee effects is not rare in nature, still studies on double Allee effect are far lesser compared to that of single Allee effect.

The article is organised as follows. The model formulation is described in Section \ref{sec_model}. Existence and local stability analysis of the possible fixed points of the considered predator-prey model is presented in Section \ref{sec_prelim}. Section \ref{sec_bif} includes the bifurcation analysis of the system for the system parameters. Numerical simulations illustrating the occurrence of bifurcation along with the phase portraits are presented in Section \ref{sec_num}. Finally, the article concludes in Section \ref{sec_con}.
%
%
\section{The model}\label{sec_model}
In this section, we formulate a continuous-time two-dimensional predator-prey model where the predator-prey interaction term is represented by a Holling type II functional response, and the prey population has a logistic growth term with double Allee effect. Let $N$ and $P$ denote the density of the prey and predator population, respectively, then the mathematical model representing the population dynamics is given by
\begin{align}\label{pre_sys5}
\frac{dN}{dT} &= rN\left(1-\frac{N}{K}\right)\left(\frac{N-p}{N+q}\right)-\frac{aNP}{1+ahN},\nonumber\\
\frac{dP}{dT} &= \frac{baNP}{1+ahN}-cP.
\end{align}
Let $x=N/K$, $y=aP/r$ \& $t=rT$, then system \eqref{pre_sys5} is transformed to the following system
\begin{align}\label{cts_sys5}
\frac{dx}{dt} &= x(1-x)\left(\frac{x-s}{x+w}\right)-\frac{xy}{1+\alpha x},\nonumber\\
\frac{dy}{dt} &= \frac{\beta xy}{1+\alpha x}-\theta y,
\end{align}
where $s=p/K$, $w=q/K$, $\alpha=ahK$, $\beta=baK/r$ \& $\theta=c/r$. 
It is suggested that discrete models are more realistic than continuous analogues because biological sample data statistics are collected at specific time intervals \cite{zhou2023}. 
Also, discrete-time dynamical systems are best suited to model nonoverlapping generations \cite{may1974} and show much more complex dynamics than the continuous-time dynamical systems \cite{may1976, neubert1992, yousef2018}. 
Many discrete-time models are obtained from continuous systems through the application of the forward Euler scheme \cite{liu2007,ren2016} or the method of semi-discretization \cite{xia2022, zhou2023, wang2015}. 
Converting this system into discrete system, as per \cite{xia2022}, by integrating these equations on the interval $[n,t)$ after semi-discretization and for $t\rightarrow n+1$, we have
\begin{align}\label{sys5}
x_{n+1} &= x_n\;\exp\left[(1-x_n)\left(\frac{x_n-s}{x_n+w}\right)-\frac{y_n}{1+\alpha x_n}\right],\nonumber\\
y_{n+1} &= y_n\; \exp\left[\frac{\beta x_n}{1+\alpha x_n}-\theta\right],
\end{align}
where $s$ is the threshold for the Allee effect, $w$ affects the overall per capita growth rate of the prey, $\alpha$, $\beta$ is the conversion rate of prey into predator biomass and $\theta$ is the natural death rate of the predator.

\section{Preliminary analysis}\label{sec_prelim}
\subsection{Fixed points}
We observe that the system has a maximum of up to four fixed points, namely $E_0(0,0)$, $E_s(s,0)$, $E_1(1,0)$ and $E_+(x_+,y_+)$. $E_s$ exists when the system shows a strong Allee effect, i.e., $s>0$, and it vanishes in case of weak Allee effect $(s<0)$. Here, $$x_+=\frac{\theta}{\beta-\alpha\theta}\quad \text{and}\quad y_+=\frac{\beta}{\beta-\alpha\theta}\left(1-\frac{\theta}{\beta-\alpha\theta}\right)\frac{\theta-s(\beta-\alpha\theta)}{\theta+w(\beta-\alpha\theta)}.$$ Hence, $E_+$ exists when $\beta>\alpha\theta$ holds along with $s<\frac{\theta}{\beta-\alpha\theta}<1$ or $1<\frac{\theta}{\beta-\alpha\theta}<s$.
\subsection{Local stability analysis}
Let $x_{n+1}=f(x_n,y_n)$ and $y_{n+1}=g(x_n,y_n)$. Then the Jacobian for system \eqref{sys5} is given by
\begin{equation} \label{jac_sys}
J(x,y)=\frac{\partial (f,g)}{\partial (x,y)}=\begin{pmatrix}
1+\left(-1+\frac{s+w+sw+w^2}{(x+w)^2}=\frac{\alpha y}{(1+\alpha x)^2}\right)f & -\frac{y}{1+\alpha x}f\\
\frac{\beta}{(1+\alpha x)^2}g & \frac{1}{y}g
\end{pmatrix}.
\end{equation}
Substituting the fixed points in the Jacobian matrix, we can analyse their local stability by analysing the characteristic roots of the corresponding matrix.
\begin{proposition}
The fixed point $E_0$ is a
\begin{enumerate}[(i)]
\item sink for $s>0$;
\item saddle point for $s<0$;
\item non-hyperbolic point for $s=0$.
\end{enumerate}
\end{proposition}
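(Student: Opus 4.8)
The plan is to linearise the map \eqref{sys5} at the origin and classify $E_0$ from the two eigenvalues of the resulting matrix. The one point requiring care is that the $(1,1)$ entry of the Jacobian \eqref{jac_sys} is written using the equilibrium identity $\exp\big[(1-x)\frac{x-s}{x+w}-\frac{y}{1+\alpha x}\big]=1$, which holds at the positive-$x$ fixed points but fails at $E_0$; so at $E_0$ I would differentiate the defining maps directly rather than copy that entry verbatim.

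Setting $h(x,y)=(1-x)\frac{x-s}{x+w}-\frac{y}{1+\alpha x}$ so that $x_{n+1}=x_n e^{h(x_n,y_n)}$ and $y_{n+1}=y_n e^{\beta x_n/(1+\alpha x_n)-\theta}$, differentiation gives $f_x=e^{h}(1+x h_x)$ and $f_y=x e^{h} h_y$ for the prey map, while for the predator map the $x$-derivative carries an explicit factor $y$ and the $y$-derivative equals $e^{\beta x/(1+\alpha x)-\theta}$. At $(0,0)$ every term with a factor $x$ or $y$ drops out and $h(0,0)=-s/w$, so
\[
J(E_0)=\begin{pmatrix} e^{-s/w} & 0\\ 0 & e^{-\theta}\end{pmatrix},
\]
with eigenvalues $\lambda_1=e^{-s/w}$ and $\lambda_2=e^{-\theta}$.

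Since $w=q/K>0$ and $\theta=c/r>0$ we have $0<\lambda_2<1$ unconditionally, so the type of $E_0$ is governed by $\lambda_1$. If $s>0$ then $-s/w<0$, hence $0<\lambda_1<1$ and both eigenvalues lie strictly inside the unit circle, so $E_0$ is a sink; if $s<0$ then $-s/w>0$, hence $\lambda_1>1>\lambda_2$, so $E_0$ is a saddle; if $s=0$ then $\lambda_1=1$ lies on the unit circle, so $E_0$ is non-hyperbolic. This gives (i)--(iii). The computation itself is routine; the only real obstacle is the bookkeeping just noted — the leading constant in the $(1,1)$ entry is really $\exp[h^{*}]$, equal to $1$ at the interior equilibria but to $e^{-s/w}$ at the origin — and it is exactly this factor that produces the $s$-dependent transition between sink and saddle.
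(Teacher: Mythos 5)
Your proof is correct and follows essentially the same route as the paper: evaluate the Jacobian at the origin to get $\mathrm{diag}(e^{-s/w},e^{-\theta})$ and classify $E_0$ by the sign of $-s/w$, using $w>0$ and $\theta>0$. Your extra remark about not copying the general $(1,1)$ entry of \eqref{jac_sys} verbatim at the origin is a sensible caution (that entry is indeed written for points where the exponential factor equals $1$), but the paper's evaluated matrix at $E_0$ agrees with yours, so the arguments coincide.
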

\begin{proof}
The Jacobian in \eqref{jac_sys} evaluated at $E_0$ is given by 
\[J\left.\middle|_{E_0}\right. = \begin{pmatrix} \e^{-s/w} & 0\\ 0 & \e^{-\theta} \end{pmatrix} .\]
Then the eigenvalues of the matrix are $\lambda_1=\e^{-s/w}$ \& $\lambda_2=\e^{-\theta}$. Therefore, $E_0$ is a sink for $s>0$, a saddle for $s<0$ and a non-hyperbolic fixed point for $s=0$. Moreover, it will never be a source.
\end{proof}

\begin{proposition}
Let $\beta>\alpha\theta$. Then $E_1$ is a
\begin{enumerate}[(i)]
\item sink if $s<1<\frac{\theta}{\beta-\alpha\theta}$;
\item source if $s>1>\frac{\theta}{\beta-\alpha\theta}$;
\item saddle point if $\min\left\{s,\frac{\theta}{\beta-\alpha\theta}\right\}>1$ or $\max\left\{s,\frac{\theta}{\beta-\alpha\theta}\right\}<1$;
\item non-hyperbolic point if $s=1$ or $\frac{\theta}{\beta-\alpha\theta}=1$ or $s=\frac{\theta}{\beta-\alpha\theta}=1$.
\end{enumerate}
\end{proposition}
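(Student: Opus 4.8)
The plan is to evaluate the Jacobian \eqref{jac_sys} at the fixed point $E_1 = (1,0)$, compute its two eigenvalues explicitly, and then translate the conditions $|\lambda_i| < 1$, $|\lambda_i| > 1$, etc., into inequalities among the parameters. Since $E_1$ has $y = 0$, the off-diagonal entry $-\frac{y}{1+\alpha x}f$ in the top row vanishes, so $J|_{E_1}$ is lower-triangular and its eigenvalues are just the two diagonal entries. The top-left entry involves $f = x(1-x)\big(\frac{x-s}{x+w}\big) - \frac{xy}{1+\alpha x}$ evaluated at $(1,0)$, which is $1\cdot 0 \cdot(\cdots) - 0 = 0$; hence the first factor $f$ kills the parenthetical term and $\lambda_1 = 1 + (\text{stuff})\cdot 0$... wait — one must be careful: $f$ here denotes $f(x_n,y_n) = x_{n+1}$, which at $E_1$ equals $x_n = 1$, not $0$. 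So I would carefully recompute: at $E_1$, $f = 1$ and $g = y\cdot e^{\beta/(1+\alpha) - \theta}$, so $\frac{1}{y}g = e^{\beta/(1+\alpha)-\theta}$. Thus $\lambda_1 = 1 + \big(-1 + \frac{s+w+sw+w^2}{(1+w)^2}\big)$ (the $\alpha y$ term drops since $y=0$) and $\lambda_2 = e^{\frac{\beta}{1+\alpha}-\theta}$.

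Next I would simplify $\lambda_1$. Note $s + w + sw + w^2 = (1+w)(s+w) - (s+w) + s + w\ldots$ — better: $s+w+sw+w^2 = w(1+w) + s(1+w) = (1+w)(s+w)$? Check: $(1+w)(s+w) = s + w + sw + w^2$. Yes. Hence $\frac{s+w+sw+w^2}{(1+w)^2} = \frac{s+w}{1+w}$, so $\lambda_1 = \frac{s+w}{1+w} = 1 - \frac{1-s}{1+w}$. Since biologically $w > 0$ (indeed $w = q/K > 0$), we have $\lambda_1 < 1 \iff s < 1$, and $\lambda_1 > 1 \iff s > 1$, with $\lambda_1 = 1$ exactly when $s = 1$; also $\lambda_1 > -1$ is automatic here (as $s > -w$ ensures $s+w > 0$, assuming the weak-Allee range is $-w < s \le 0$, matching the $-K < m$ convention in the introduction), so $\lambda_1 \in (-1,1)$ precisely when $s<1$. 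For $\lambda_2 = e^{\frac{\beta}{1+\alpha}-\theta}$, since it is a positive exponential, $\lambda_2 < 1 \iff \frac{\beta}{1+\alpha} < \theta \iff \beta - \alpha\theta < \theta \iff \frac{\theta}{\beta-\alpha\theta} > 1$ (using $\beta > \alpha\theta$ so the division preserves the inequality direction), $\lambda_2 > 1 \iff \frac{\theta}{\beta-\alpha\theta} < 1$, and $\lambda_2 = 1 \iff \frac{\theta}{\beta-\alpha\theta} = 1$.

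Finally I would assemble the four cases by combining the two dichotomies. A \emph{sink} requires both $|\lambda_1|<1$ and $|\lambda_2|<1$, i.e. $s<1$ and $\frac{\theta}{\beta-\alpha\theta}>1$, giving (i). A \emph{source} requires both $>1$, i.e. $s>1$ and $\frac{\theta}{\beta-\alpha\theta}<1$, giving (ii). A \emph{saddle} requires one eigenvalue inside and one outside the unit circle: either $s>1$ with $\frac{\theta}{\beta-\alpha\theta}>1$ (so $\min\{s,\frac{\theta}{\beta-\alpha\theta}\}>1$) or $s<1$ with $\frac{\theta}{\beta-\alpha\theta}<1$ (so $\max\{s,\frac{\theta}{\beta-\alpha\theta}\}<1$), giving (iii). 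A \emph{non-hyperbolic} point occurs when at least one eigenvalue has modulus $1$, which here (both eigenvalues positive) means $\lambda_1 = 1$ or $\lambda_2 = 1$, i.e. $s=1$ or $\frac{\theta}{\beta-\alpha\theta}=1$ (or both), giving (iv).

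The calculation is essentially routine; the only genuine subtlety — and the step I would be most careful about — is the algebraic identity $s+w+sw+w^2 = (1+w)(s+w)$ that collapses $\lambda_1$ to the clean form $\frac{s+w}{1+w}$, together with checking that $\lambda_1 > -1$ always holds under the standing sign conventions on $w$ and $s$, so that ``$|\lambda_1| < 1$'' reduces simply to ``$s < 1$'' and we do not need to carry an extra lower-bound condition into the statement.
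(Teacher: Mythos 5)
Your proof is correct and follows essentially the same route as the paper: the Jacobian at $E_1$ is triangular, so stability reduces to the two diagonal eigenvalues, followed by combining the two dichotomies case by case. Two remarks on the details. First, the triangularity actually comes from the $(2,1)$ entry $\frac{\beta}{(1+\alpha x)^2}g$ vanishing (since $g=0$ when $y=0$), not from the $(1,2)$ entry: the correct $\partial f/\partial y$ is $-f/(1+\alpha x)$, which equals $-1/(1+\alpha)$ at $E_1$ exactly as in the paper's evaluated matrix (the $y$ in the displayed general Jacobian is a typo). This does not change the eigenvalues. Second, your $\lambda_1=\frac{s+w}{1+w}=1+\frac{s-1}{1+w}$ is the correct value, and it is where your argument genuinely improves on the paper's: the paper's proof writes the entry as $1+\frac{s-1}{s+w}$, states $\lambda_1=1-\frac{s-1}{s+w}$, and then analyses it via a root of $\frac{s(s-1)}{s+w}=2$ --- a detour that belongs to the $E_s$ analysis and is inconsistent with the paper's own companion proposition for $\beta=\alpha\theta$, which uses the denominator $w+1$ in agreement with you. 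Your form makes $|\lambda_1|<1\iff -2w-1<s<1$ transparent, and your explicit flag that $\lambda_1>-1$ requires $s+w>0$ (more weakly, $s>-2w-1$) correctly identifies a lower bound on $s$ that the statement of this proposition silently omits but that the $\beta=\alpha\theta$ proposition does include.
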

\begin{proof}
For $\beta>\alpha\theta$, the Jacobian in \eqref{jac_sys} evaluated at $E_1$ is given by
\[J\left.\middle|_{E_1}\right. =
\begin{pmatrix}
1+\frac{s-1}{s+w} & -\frac{1}{\alpha+1}\\
0 & \e^{\frac{\beta}{\alpha+1}-\theta}.
\end{pmatrix}
\]
Then the eigenvalues of the Jacobian are $\lambda_1=1-\frac{s-1}{s+w}$ \& $\lambda_2 = \e^{\frac{\beta}{\alpha +1}-\theta}$. For $s=1$, $|\lambda_1|=1$; for $s>1$, $|\lambda_1|>1$; and for $s<1$, the value of $|\lambda_1|$ will depend on the root, $s_0$, of the equation 
$$\frac{s(s-1)}{s+w}=2,\text{ where } s_0=2w+3.$$
So, for the case $s<1$, we will have $|\lambda_1|<1$ when $s<s_0$; $|\lambda_1|>1$ when $s>s_0$; and $|\lambda_1|=1$ when $s=s_0=2w+3$. 
Clearly, $|\lambda_1|\ngeqslant1$ for $s<1$ since $s_0=2w+3>3$. 
Hence, $|\lambda_1|<1$ when $s<1$. 
Also, we can have $|\lambda_2|=1$ when $\frac{\theta}{\beta-\alpha\theta}=1$; $|\lambda_2|<1$ when $\frac{\theta}{\beta-\alpha\theta}>1$; and $|\lambda_2|>1$ when $\frac{\theta}{\beta-\alpha\theta}<1$. 
Considering the cases for $|\lambda_{1,2}|<1$, we have $\frac{\theta}{\beta-\alpha\theta}>1$ and $s<1$. 
Thus we can simply say that $E_1$ is a sink when $s<1<\frac{\theta}{\beta-\alpha\theta}$. 
Similarly, we can determine the conditions under which $E_1$ is a source, saddle, or non-hyperbolic fixed point by considering different cases based on the absolute values of $\lambda_1$ and $\lambda_2$.
\end{proof}
\begin{figure}[H]
 \subfloat[$\beta>\alpha\theta$ \& $\theta/(\beta-\alpha\theta)<1$ for $\alpha=8.4$\label{subfig-6}]{%
  \includegraphics[width=0.45\textwidth, height=6cm]{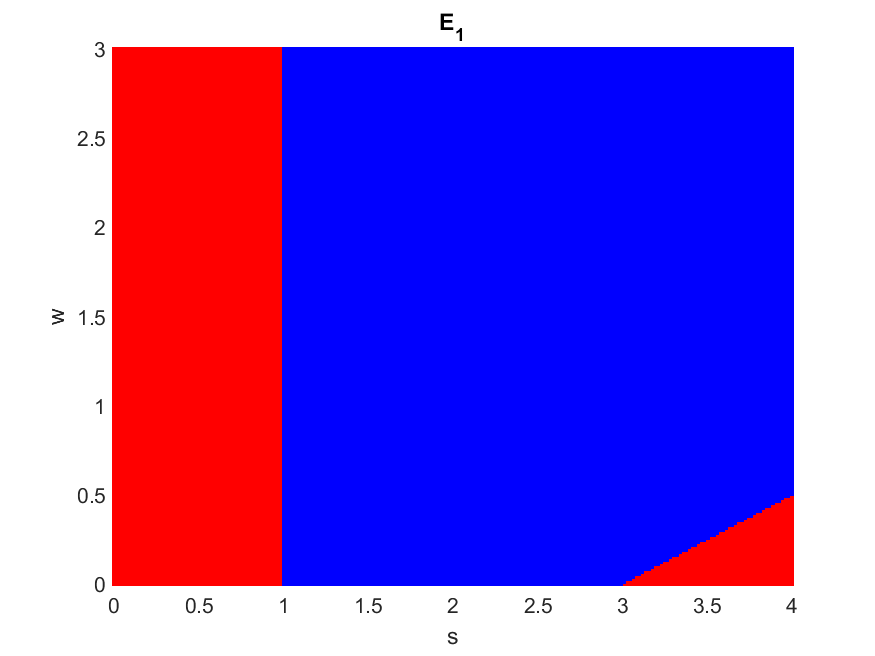}}
  \subfloat[$\beta>\alpha\theta$ \& $\theta/(\beta-\alpha\theta)>1$ for $\alpha=9.4$\label{subfig-6}]{%
  \includegraphics[width=0.45\textwidth, height=6cm]{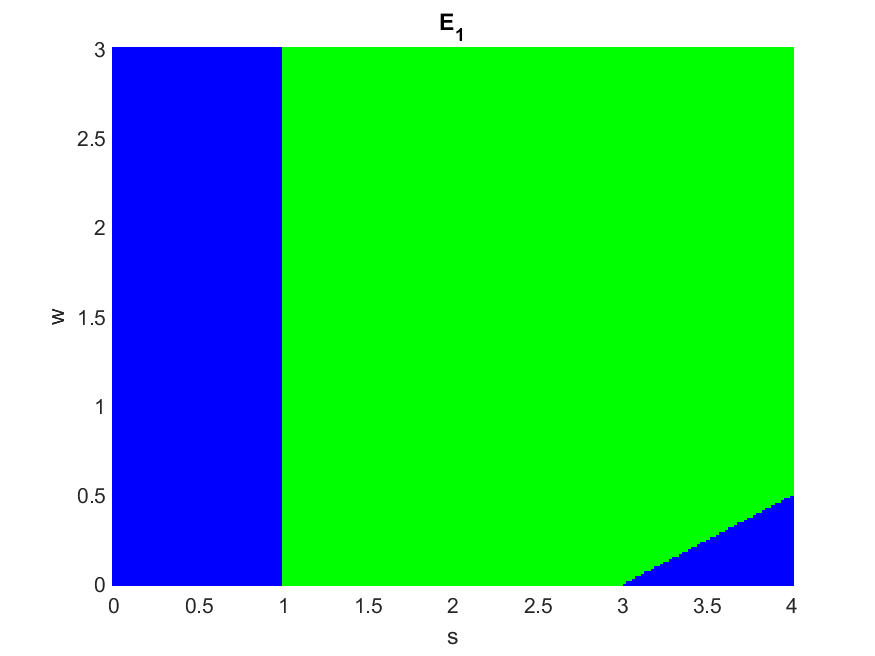}
  }
 \caption{Topological classification of $E_1(1,0)$ for $\beta=1.3$, $\theta=0.13$, $s\in [0,4]$, $w\in [0,3]$ and $\alpha\in\{8.4,9.4\}$. 
 $[~\vcenter{\hbox{$\greensqr$}}$ : sink; ~$\vcenter{\hbox{$\redsqr$}}$ : source; ~$\vcenter{\hbox{$\bluesqr$}}$ : saddle$~]$}
 \label{fig:e1_1}
\end{figure}

\begin{proposition}
Let $\beta=\alpha\theta$. Then $E_1$ is a
\begin{enumerate}[(i)]
\item sink if $-2w-1<s<1$;
\item saddle point if $s<-2w-1$ or $s>1$;
\item non-hyperbolic point if $s=-2w-1$ or $s=1$.
\end{enumerate}
\end{proposition}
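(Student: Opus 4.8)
The plan is to evaluate the Jacobian \eqref{jac_sys} at $E_1=(1,0)$ under the hypothesis $\beta=\alpha\theta$ and to read the multipliers off its triangular structure, mirroring the proof for $\beta>\alpha\theta$. Since the $(2,1)$-entry of \eqref{jac_sys} carries the factor $g=y_{n+1}$, which vanishes at $y=0$, the Jacobian at $E_1$ is upper triangular, so its eigenvalues are the two diagonal entries. Simplifying the top-left entry at $x=1$, $y=0$ via the identity $s+w+sw+w^2=(s+w)(1+w)$ gives $\lambda_1=1+\dfrac{s-1}{1+w}=\dfrac{s+w}{1+w}$, and the bottom-right entry is $\lambda_2=\exp\!\bigl[\tfrac{\beta}{1+\alpha}-\theta\bigr]$.

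The one genuinely new point is the effect of the hypothesis on $\lambda_2$. Substituting $\beta=\alpha\theta$ collapses the exponent: $\tfrac{\beta}{1+\alpha}-\theta=\tfrac{\alpha\theta}{1+\alpha}-\theta=-\tfrac{\theta}{1+\alpha}$, which is strictly negative because $\theta>0$ and $\alpha\geq0$. Hence $\lambda_2\in(0,1)$ for every admissible choice of parameters, so $|\lambda_2|<1$ holds unconditionally and $E_1$ can never be a source in this regime — this is precisely why the classification has three cases here instead of the four seen for $\beta>\alpha\theta$.

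It remains to compare $|\lambda_1|$ with $1$. Because $1+w>0$, the double inequality $-1<\tfrac{s+w}{1+w}<1$ is equivalent to $-(1+w)<s+w<1+w$, i.e.\ $-2w-1<s<1$; symmetrically $|\lambda_1|>1$ iff $s<-2w-1$ or $s>1$, and $|\lambda_1|=1$ iff $s=1$ (then $\lambda_1=1$) or $s=-2w-1$ (then $\lambda_1=-1$). Combining with $|\lambda_2|<1$ gives the three conclusions: both multipliers strictly inside the unit disk on $-2w-1<s<1$, so $E_1$ is a sink (item (i)); exactly one multiplier outside when $s<-2w-1$ or $s>1$, so $E_1$ is a saddle (item (ii)); and $|\lambda_1|=1\neq|\lambda_2|$ at the two threshold values, so $E_1$ is non-hyperbolic (item (iii)). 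The argument is an elementary computation; the only steps that need a little care are the algebraic reduction of the top-left Jacobian entry to $\tfrac{s+w}{1+w}$ and keeping the sign of $1+w$ (positive throughout, since $w\geq0$) straight when clearing denominators.
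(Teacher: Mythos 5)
Your proposal is correct and follows essentially the same route as the paper: evaluate the Jacobian at $E_1$ under $\beta=\alpha\theta$, use its upper-triangular form to read off $\lambda_1=1+\frac{s-1}{1+w}$ and $\lambda_2=\e^{-\theta/(1+\alpha)}$, note $|\lambda_2|<1$ unconditionally, and classify by $|\lambda_1|$. You merely spell out the elementary inequality manipulation that the paper leaves as "we can now easily obtain the required conditions."
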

\begin{proof}
For $\beta=\alpha\theta$, the Jacobian at $E_1$ becomes
\[J \left.\middle|_{E_1}\right. =\begin{pmatrix}
1+\frac{s-1}{w+1} & -\frac{1}{\alpha+1}\\
0 & \e^{-\frac{\theta}{\alpha+1}}
\end{pmatrix}. \]
Then the eigenvalues of the Jacobian are $\lambda_1=1+\frac{s-1}{w+1}$ and $\lambda_2=\e^{-\frac{\theta}{\alpha+1}}$. Clearly, $|\lambda_2|<1$. So, the nature of $E_1$ depends on the absolute value of $\lambda_1$. We can now easily obtain the required conditions as given in the proposition.
\end{proof}

\begin{figure}[H]
\includegraphics[width=0.45\textwidth, height=6cm]{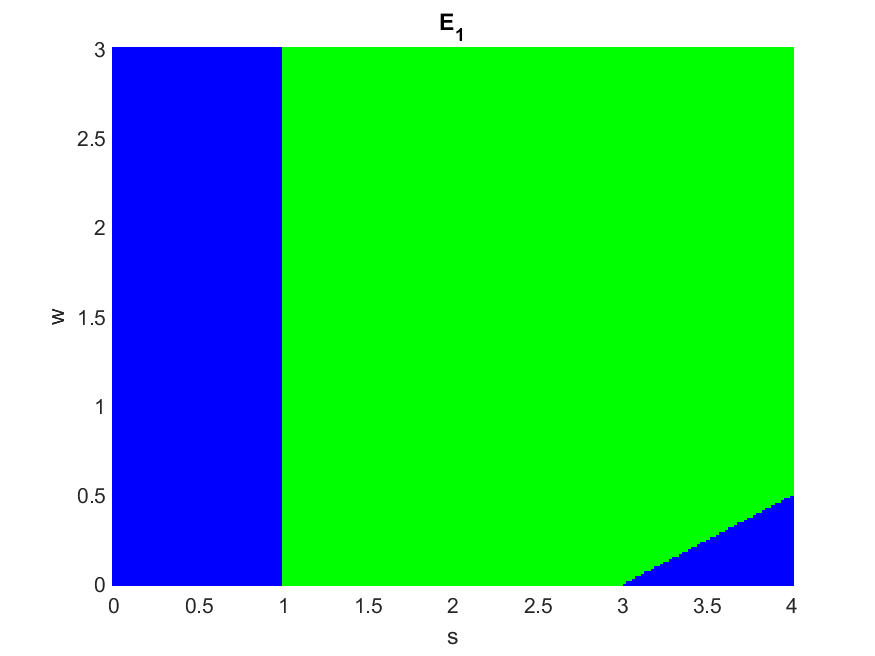}
\caption{Topological classification of $E_s(s,0)$ when $\beta=\alpha\theta$ for $\alpha=10$ for $\beta=1.3$, $\theta=0.13$, $s\in [0,4]$, $w\in [0,3]$ and $\alpha=10$. 
$[~\vcenter{\hbox{$\greensqr$}}$ : sink; ~$\vcenter{\hbox{$\bluesqr$}}$ : saddle$~]$}
\label{fig:e1_1}
\end{figure}

\begin{proposition}
Let $s>0$ and $\beta>\alpha\theta$. Then, the fixed point $E_s$ is a
\begin{enumerate}[(i)]
\item sink if $1<s<\min\left\{\frac{\theta}{\beta-\alpha\theta},s_+\right\}$;
\item source if $\frac{\theta}{\beta-\alpha\theta}<s<1$ or $s>\max\left\{\frac{\theta}{\beta-\alpha\theta},s_+\right\}$;
\item saddle point if $s<\min\{1,\frac{\theta}{\beta-\alpha\theta}\}$ or $\max\left\{1,s_+\right\}<s<\frac{\theta}{\beta-\alpha\theta}$ or $\max\left\{1,\frac{\theta}{\beta-\alpha\theta}\right\}<s<s_+$;
\item non-hyperbolic point if $s=\frac{\theta}{\beta-\alpha\theta}=1$ or $s=s_+=\frac{\theta}{\beta-\alpha\theta}>1$,
\end{enumerate}
where $s_+=(3+\sqrt{8w+9})/2$ which is the largest positive root of $\frac{s(s-1)}{s+w}=2$.
\end{proposition}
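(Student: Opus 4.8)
The plan is to proceed exactly as in the proofs of the preceding propositions: evaluate the Jacobian \eqref{jac_sys} at $E_s=(s,0)$, observe that it is triangular there, read off its two eigenvalues in closed form, and then translate each of $|\lambda_1|\lessgtr 1$ and $|\lambda_2|\lessgtr 1$ into an inequality in $s$. At $E_s$ we have $y=0$, and the prey exponent $(1-x)\tfrac{x-s}{x+w}$ vanishes at $x=s$, so there $f=s$ and $g=0$; in particular the lower-left entry $\tfrac{\beta}{(1+\alpha x)^2}g$ of \eqref{jac_sys} is zero and $J|_{E_s}$ is upper triangular. Writing $h(x)=(1-x)\tfrac{x-s}{x+w}$, a short differentiation with cancellation at $x=s$ gives $h'(s)=\tfrac{1-s}{s+w}$, so the $(1,1)$-entry is $\lambda_1=1+s\,h'(s)=1-\tfrac{s(s-1)}{s+w}$, while the $(2,2)$-entry $\tfrac1y g$ equals $\exp\!\big(\tfrac{\beta x}{1+\alpha x}-\theta\big)$ evaluated at $x=s$, i.e.\ $\lambda_2=\exp\!\big(\tfrac{\beta s}{1+\alpha s}-\theta\big)>0$.

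Next I would analyse the two eigenvalues separately. Since $\lambda_2>0$, we have $|\lambda_2|\lessgtr 1$ iff $\tfrac{\beta s}{1+\alpha s}\lessgtr\theta$, i.e.\ $s(\beta-\alpha\theta)\lessgtr\theta$; as $\beta>\alpha\theta$ this reduces to $s\lessgtr\tfrac{\theta}{\beta-\alpha\theta}$. For $\lambda_1$ set $\phi(s)=\tfrac{s(s-1)}{s+w}$; since $s>0$ and $w>0$ the denominator is positive, so $\lambda_1<1\iff\phi(s)>0\iff s>1$, and $\lambda_1>-1\iff\phi(s)<2\iff s^2-3s-2w<0$. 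The quadratic $s^2-3s-2w$ has roots $\tfrac{3\pm\sqrt{8w+9}}{2}$, the smaller of which is $\le 0$; hence on $s>0$ the inequality holds precisely for $s<s_+=\tfrac{3+\sqrt{8w+9}}{2}$, and moreover $s_+\ge 3>1$ always. Consequently $|\lambda_1|<1\iff 1<s<s_+$, $|\lambda_1|>1\iff s<1\text{ or }s>s_+$, and $|\lambda_1|=1\iff s\in\{1,s_+\}$.

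Finally I would combine the sign patterns. Imposing $|\lambda_1|<1$ and $|\lambda_2|<1$ gives the sink region $1<s<\min\{\tfrac{\theta}{\beta-\alpha\theta},s_+\}$; imposing $|\lambda_1|>1$ and $|\lambda_2|>1$ and splitting the case $|\lambda_1|>1$ into $s<1$ and $s>s_+$ yields the two source intervals of (ii); the two mixed patterns, together with $s_+>1$, give the three saddle intervals of (iii); and $|\lambda_1|=1$ or $|\lambda_2|=1$ gives the non-hyperbolic cases in (iv). I do not expect a conceptual obstacle here — after reducing to a triangular matrix everything is elementary sign analysis. The only steps demanding care are the correct differentiation of the Allee-modified logistic growth term (so that $\lambda_1=1-\tfrac{s(s-1)}{s+w}$, not a sign-flipped variant, given that \eqref{jac_sys} is displayed in a condensed form), and the final interval bookkeeping: checking that among the roots of $s^2-3s-2w$ only $s_+$ is relevant for $s>0$ and that $s_+>1$ unconditionally, which is precisely what makes the three thresholds $1$, $s_+$ and $\tfrac{\theta}{\beta-\alpha\theta}$ interleave in the manner claimed.
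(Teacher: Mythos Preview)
Your proposal is correct and follows essentially the same route as the paper: evaluate the Jacobian at $E_s$, use its upper-triangular form to read off $\lambda_1=1-\tfrac{s(s-1)}{s+w}$ and $\lambda_2=\exp\!\big(\tfrac{\beta s}{1+\alpha s}-\theta\big)$, reduce $|\lambda_1|\lessgtr 1$ to the thresholds $1$ and $s_+$ via the quadratic $s^2-3s-2w$, reduce $|\lambda_2|\lessgtr 1$ to $s\lessgtr\tfrac{\theta}{\beta-\alpha\theta}$, and combine. Your write-up is in fact a little more explicit than the paper's (you spell out the differentiation and the sign analysis of the quadratic roots), but there is no substantive difference in method.
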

\begin{proof}
Clearly, $s>0$ must hold for the existence of $E_s$. For $\beta>\alpha\theta$, the Jacobian in \eqref{jac_sys} evaluated at $E_s$ is given by
\[J\left.\middle|_{E_s}\right. =
\begin{pmatrix}
1-\frac{s(s-1)}{s+w} & -\frac{s}{\alpha s+1}\\
0 & \e^{\frac{\beta s}{\alpha s+1}-\theta}.
\end{pmatrix}
\]
Then the eigenvalues of the Jacobian are $\lambda_1=1-\frac{s(s-1)}{s+w}$ \& $\lambda_2 = \e^{\frac{\beta s}{\alpha s+1}-\theta}$. For $s=1$, $|\lambda_1|=1$; for $s<1$, $|\lambda_1|>1$; and for $s>1$, the value of $|\lambda_1|$ will depend on the positive root, $s_+$, of the equation $$\frac{s(s-1)}{s+w}=2,\text{ where } s_+=\frac{3+\sqrt{8w+9}}{2}>3.$$
So, for the case $s>1$, we will have $|\lambda_1|<1$ when $s\in (1,s_+)$; $|\lambda_1|>1$ when $s\in (s_+,\infty)$; and $|\lambda_1|=1$ when $s=s_+$. Similarly, we can obtain that $|\lambda_2|=1$ when $s=\frac{\theta}{\beta-\alpha\theta}$; $|\lambda_2|<1$ when $s<\frac{\theta}{\beta-\alpha\theta}$; and $|\lambda_2|>1$ when $s>\frac{\theta}{\beta-\alpha\theta}$. Combining the cases according to the absolute values of $\lambda_{1,2}$, we can conclude the statement of the proposition.
\end{proof}

\begin{figure}[H]
 \subfloat[$\beta>\alpha\theta$ \& $\theta/(\beta-\alpha\theta)<1$ for $\alpha=8.4$\label{subfig-6}]{%
  \includegraphics[width=0.45\textwidth, height=6cm]{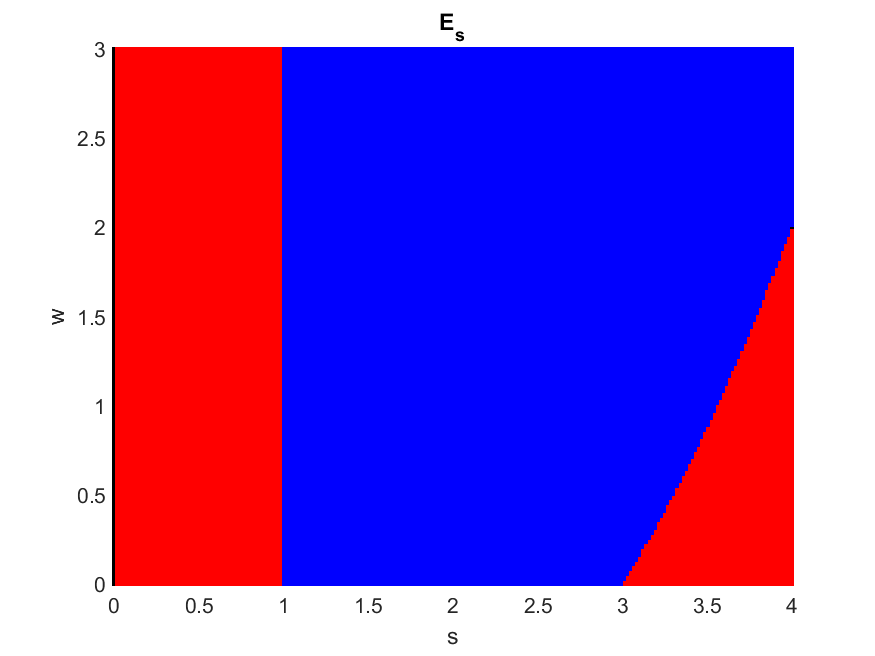}}
  \subfloat[$\beta>\alpha\theta$ \& $\theta/(\beta-\alpha\theta)>1$ for $\alpha=9.4$\label{subfig-6}]{%
  \includegraphics[width=0.45\textwidth, height=6cm]{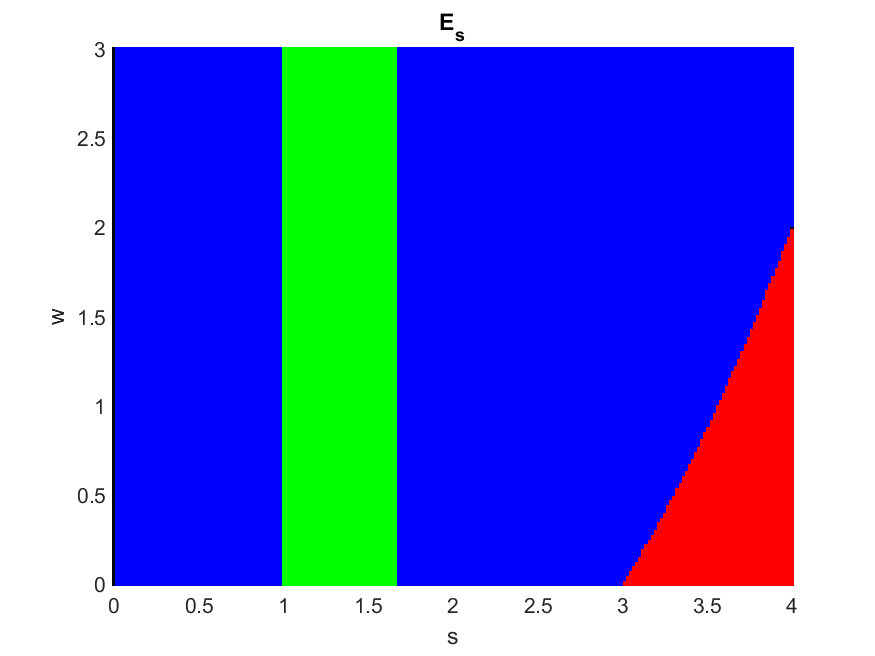}
  }
\caption{Topological classification of $E_s(s,0)$ for $\beta=1.3$, $\theta=0.13$, $s\in [0,4]$, $w\in [0,3]$ and $\alpha\in\{8.4,9.4\}$. 
$[~\vcenter{\hbox{$\greensqr$}}$ : sink; ~$\vcenter{\hbox{$\redsqr$}}$ : source; ~$\vcenter{\hbox{$\bluesqr$}}$ : saddle$]$}
 \label{fig:es_1}
\end{figure}

\begin{proposition}
Let $s>0$ and $\beta=\alpha\theta$. Then, the fixed point $E_s$ is a
\begin{enumerate}[(i)]
\item sink if $1<s<s_+$;
\item saddle point if $s<1$ or $s>s_+$;
\item non-hyperbolic point if $s=1$ or $s=s_+$. 
\end{enumerate}
\end{proposition}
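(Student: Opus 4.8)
The plan is to mirror the proof of the preceding proposition (the case $\beta>\alpha\theta$), taking advantage of a simplification that occurs on the set $\beta=\alpha\theta$. First I would evaluate the Jacobian \eqref{jac_sys} at $E_s=(s,0)$. Because the factor $g$ appearing in the lower-left entry vanishes when $y=0$, the matrix is upper triangular, so its eigenvalues are simply the diagonal entries $\lambda_1=1-\frac{s(s-1)}{s+w}$ and $\lambda_2=\exp\!\left(\frac{\beta s}{\alpha s+1}-\theta\right)$.

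The crucial step is to observe that under $\beta=\alpha\theta$ the exponent defining $\lambda_2$ collapses: $\frac{\beta s}{\alpha s+1}-\theta=\frac{\alpha\theta s-\theta(\alpha s+1)}{\alpha s+1}=-\frac{\theta}{\alpha s+1}$, hence $\lambda_2=\exp\!\left(-\frac{\theta}{\alpha s+1}\right)$. Since $\theta,\alpha,s>0$, this forces $0<\lambda_2<1$ with no further condition. In contrast to the $\beta>\alpha\theta$ case, the predator direction is now always contracting, so the topological type of $E_s$ is determined entirely by $|\lambda_1|$.

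It then remains to reuse the analysis of $\lambda_1$ already performed for $E_s$ in the $\beta>\alpha\theta$ case: one has $\lambda_1\le 1$ for all admissible $s>0$, with $\lambda_1=1$ only at $s=1$; for $s<1$, $s(s-1)<0$ gives $\lambda_1>1$, so $|\lambda_1|>1$; and for $s>1$ the comparison of $|\lambda_1|$ with $1$ is governed by the positive root $s_+=(3+\sqrt{8w+9})/2>3$ of $\frac{s(s-1)}{s+w}=2$, yielding $|\lambda_1|<1$ on $(1,s_+)$, $|\lambda_1|=1$ at $s=s_+$, and $|\lambda_1|>1$ on $(s_+,\infty)$. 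Combining with $0<\lambda_2<1$: $E_s$ is a sink exactly when $1<s<s_+$; a saddle when $|\lambda_1|>1$, i.e. when $s<1$ or $s>s_+$; and non-hyperbolic when $|\lambda_1|=1$, i.e. at $s=1$ or $s=s_+$. This is precisely the content of (i)--(iii); note that there is no source case here, since $\lambda_2$ always lies inside the unit circle.

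I do not anticipate a genuine obstacle: once the simplification of $\lambda_2$ is recorded, the statement is essentially a corollary of the $\beta>\alpha\theta$ proposition. The only point meriting a line of care is that $s_+>1$ (indeed $s_+>3$), so that the interval $(1,s_+)$ is nonempty and the case split is consistent; this is immediate from the explicit formula for $s_+$.
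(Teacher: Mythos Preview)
Your proposal is correct and follows essentially the same approach as the paper: compute the upper-triangular Jacobian at $E_s$, observe that $\beta=\alpha\theta$ forces $\lambda_2=\exp\bigl(-\theta/(\alpha s+1)\bigr)\in(0,1)$, and then reduce everything to the analysis of $|\lambda_1|$ already carried out in the $\beta>\alpha\theta$ proposition. Note that the paper's displayed value $\lambda_1=1-\tfrac{s-1}{w+1}$ is a typo; your $\lambda_1=1-\tfrac{s(s-1)}{s+w}$ is the one consistent with the preceding proposition and with the definition of $s_+$.
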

\begin{proof}
For $\beta=\alpha\theta$, the Jacobian at $E_s$ becomes
\[J \left.\middle|_{E_1}\right. =\begin{pmatrix}
1-\frac{s-1}{w+1} & -\frac{s}{\alpha s+1}\\
0 & \e^{-\frac{\theta}{\alpha s+1}}
\end{pmatrix}. \]
Then the eigenvalues of the Jacobian are $\lambda_1=1-\frac{s-1}{w+1}$ and $\lambda_2=\e^{-\frac{\theta}{\alpha s+1}}$. Clearly, $|\lambda_2|<1$ since $s>0$. So, the nature of $E_1$ depends on the absolute value of $\lambda_1$. Now, it can be deduced that the sufficient conditions given in the propositions hold true.
\end{proof}

\begin{figure}[H]
  \includegraphics[width=0.45\textwidth, height=6cm]{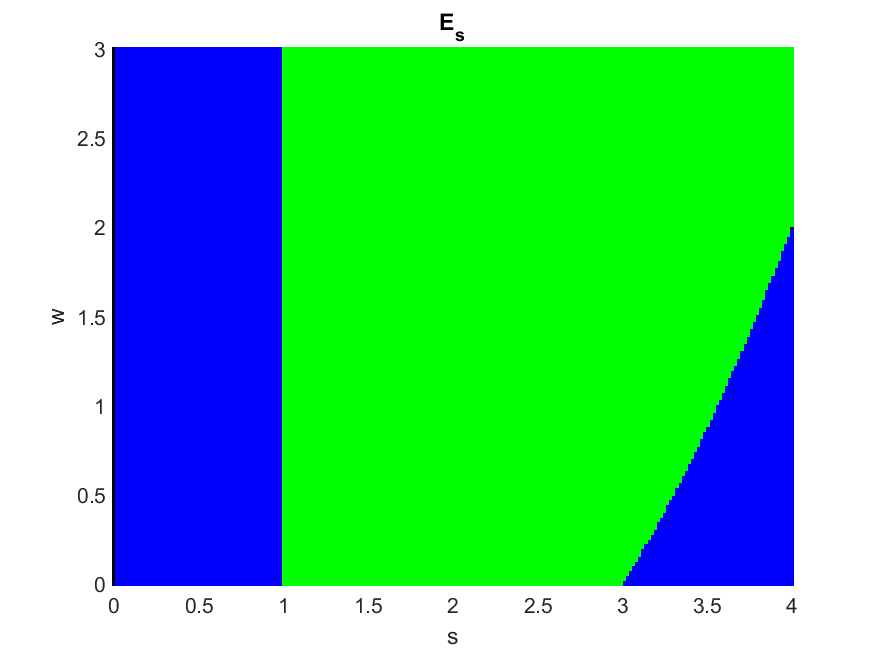}
 \caption{Topological classification of $E_s(s,0)$ when $\beta=\alpha\theta$ for $\alpha=10$ for $\beta=1.3$, $\theta=0.13$, $s\in [0,4]$, $w\in [0,3]$ and $\alpha=10$. 
 $[~\vcenter{\hbox{$\greensqr$}}$ : sink;  ~$\vcenter{\hbox{$\bluesqr$}}$ : saddle$~]$}
 \label{fig:es_2}
\end{figure}

At the positive fixed point $E_+(x_+,y_+)$, the variational matrix of system \eqref{sys5} is given by
\begin{equation*}
J(E_+)=\begin{pmatrix}
J_{11} & J_{12}\\
J_{21} & J_{22}
\end{pmatrix},
\end{equation*}
where $$J_{11}=1-x_++\frac{(s+w+sw+w^2)x_+}{(x_++w)^2}+\frac{\alpha x_+y_+}{(1+\alpha x_+)^2},$$ $$J_{12}=-\frac{x_+}{1+\alpha x_+},\; J_{21}=\frac{\beta y_+}{(1+\alpha x_+)^2}\;\; \&\;\; J_{22}=1.$$
Therefore, we can evaluate the following quantities
\begin{align*}
\text{tr}(J) &= 2-x_++\frac{(s+w+sw+w^2)x_+}{(x_++w)^2}+\frac{\alpha x_+y_+}{(1+\alpha x_+)^2},\\
\text{det}(J) &= 1-x_++\frac{(s+w+sw+w^2)x_+}{(x_++w)^2}+\frac{\alpha x_+y_+}{(1+\alpha x_+)^2}+\frac{\beta x_+y_+}{(1+\alpha x_+)^3}.
\end{align*}
\begin{lemma}[\cite{din2016,isik2019}]
Let $F(\lambda)=\lambda^2+B\lambda+C$ is the characteristic polynomial of the Jacobian at $E_+$ and let $\lambda_1$ and $\lambda_2$ are two roots of $F(\lambda)=0$. Since $F(1)>0$, so the following statements hold.
\begin{enumerate}[(a)]
\item $E_+$ is a sink, i.e., $|\lambda_1|<1$ \& $|\lambda_2|<1$ iff $F(-1)>0$ and $C<1$.
\item $E_+$ is a source, i.e., $|\lambda_1|>1$ \& $|\lambda_2|>1$ iff $F(-1)>0$ and $C>1$.
\item $E_+$ is a saddle point, i.e., $|\lambda_1|<1$ \& $|\lambda_2|>1$ iff $F(-1)<0$.
\item $E_+$ is non-hyperbolic, i.e., $\lambda_1$ \& $\lambda_2$ are complex conjugates and $|\lambda_1|=|\lambda_2|=1$ iff $-2<B<2$ \& $C=1$.
\item $E_+$ is non-hyperbolic, i.e., $\lambda_1=\lambda_2=-1$ iff $F(-1)=0$ and $B=2$.
\item $E_+$ is non-hyperbolic, i.e., $\lambda_1=-1$ \& $\lambda_2\neq -1$ iff $ F(-1)=0$ and $B\neq 2$.
\end{enumerate}
\end{lemma}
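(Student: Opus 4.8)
The plan is to read off the locations of the roots of $F(\lambda)=\lambda^2+B\lambda+C$ relative to the unit circle using only elementary facts about an upward-opening parabola, together with the identities $\lambda_1+\lambda_2=-B$, $\lambda_1\lambda_2=C$, $F(1)=1+B+C$ and $F(-1)=1-B+C$, and keeping the standing hypothesis $F(1)>0$ in force throughout. First I would dispose of the saddle case (c): since $F$ opens upward, $F(1)>0$ together with $F(-1)<0$ forces the two roots to be real with $-1$ lying strictly between them (because $F$ is negative at $-1$), hence one root lies in $(-\infty,-1)$; and then $F(1)>0$ with the ordering of the roots forces the larger root into $(-1,1)$. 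So exactly one root has modulus exceeding $1$, i.e.\ $E_+$ is a saddle. Conversely, a saddle forces $F(-1)=(1+\lambda_1)(1+\lambda_2)<0$, since this product is negative precisely when exactly one root lies to the right of $-1$.

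Next I would handle the remaining cases under the assumption $F(-1)>0$, splitting on the sign of the discriminant $\Delta=B^2-4C$. If $\Delta<0$ the roots form a genuine complex-conjugate pair, so $C=\lambda_1\overline{\lambda_1}=|\lambda_1|^2>0$ and $F(-1)=|{-1}-\lambda_1|^2>0$ automatically; the common modulus equals $1$ exactly when $C=1$, in which subcase $\Delta<0$ reads $-2<B<2$, which is statement (d), while $C<1$ gives a sink and $C>1$ a source. If $\Delta\ge 0$ the roots are real; writing $F(1)=(1-\lambda_1)(1-\lambda_2)>0$ and $F(-1)=(1+\lambda_1)(1+\lambda_2)>0$, both roots lie on the same side of $1$ and on the same side of $-1$. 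A short exhaustive check then shows: if $C=\lambda_1\lambda_2<1$, the configurations ``both $>1$'' and ``both $<-1$'' are impossible (each forces $C>1$), so both roots lie in $(-1,1)$ — a sink, giving (a); if $C>1$, the configuration ``both in $(-1,1)$'' is impossible (it forces $|C|<1$), so both roots lie outside the closed unit disc — a source, giving (b). The converse implications in (a) and (b) follow by reversing these sign computations.

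Finally, for the degenerate cases (e) and (f) I would use that $F(-1)=0$ is equivalent to $-1$ being a root, in which case the second root is $-C=1-B$; then $B=2$ makes the second root equal to $-1$ as well, giving $\lambda_1=\lambda_2=-1$, which is (e), whereas $B\neq 2$ leaves $\lambda_2=1-B\neq -1$, which is (f). (The potential clash ``$B=-2$, $C=1$'' is harmless, as it would give the double root $+1$ and hence $F(1)=0$, contradicting the hypothesis.)

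I do not expect a genuine obstacle: the statement is the standard Schur--Cohn/Jury criterion specialized to a quadratic, and everything reduces to bookkeeping with the sign of the parabola at $\pm1$ and with the elementary symmetric functions of its roots. The only point demanding mild care is making the $\Delta\ge 0$ case analysis exhaustive — in particular noting that the mixed configuration ``one root $>1$ and the other $<-1$'' is ruled out precisely because it would force $F(1)<0$ — and verifying that each ``iff'' in (a)--(f) is genuinely reversible, which is why I would track both directions at every step.
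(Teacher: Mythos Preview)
Your argument is correct and is the standard Schur--Cohn/Jury analysis for a monic quadratic: track the sign of the upward-opening parabola at $\pm 1$ together with the product $C=\lambda_1\lambda_2$, split on the discriminant, and handle the degenerate boundary cases separately. The one place where your write-up could be tightened is the converse in~(c): when you assert that a saddle forces $F(-1)<0$, you should say explicitly that the alternative saddle configuration $-1<\lambda_1<1<\lambda_2$ is excluded by $F(1)=(1-\lambda_1)(1-\lambda_2)<0$, so under the standing hypothesis the outside root must lie below $-1$; you note this point later for~(a)--(b) but it is already needed here.

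As for comparison with the paper: there is nothing to compare. The lemma is quoted from \cite{din2016,isik2019} and stated without proof; the authors treat it as a known classification tool and move directly to applying it at $E_+$. Your sketch therefore supplies what the paper omits, and since the result is classical there is no meaningful difference in approach to discuss.
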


\begin{figure}[H]
 \subfloat[$s=0.0125$, $w=0.125$, $\theta=0.13$\label{subfig-4}]{%
  \includegraphics[width=0.45\textwidth, height=6cm]{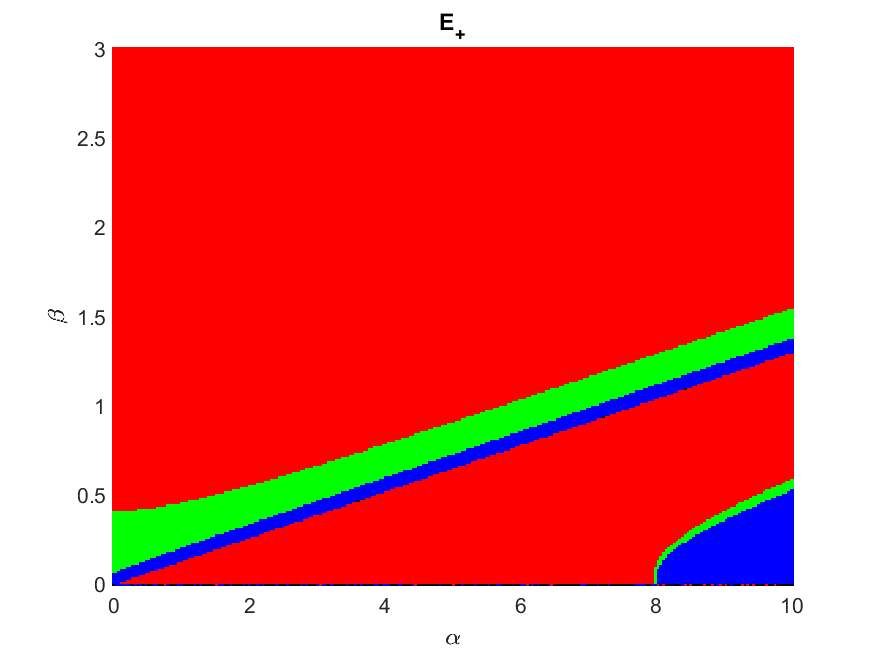}}
 \subfloat[$\alpha=8.4$, $\beta=1.3$, $\theta=0.13$\label{subfig-5}]{%
  \includegraphics[width=0.45\textwidth, height=6cm]{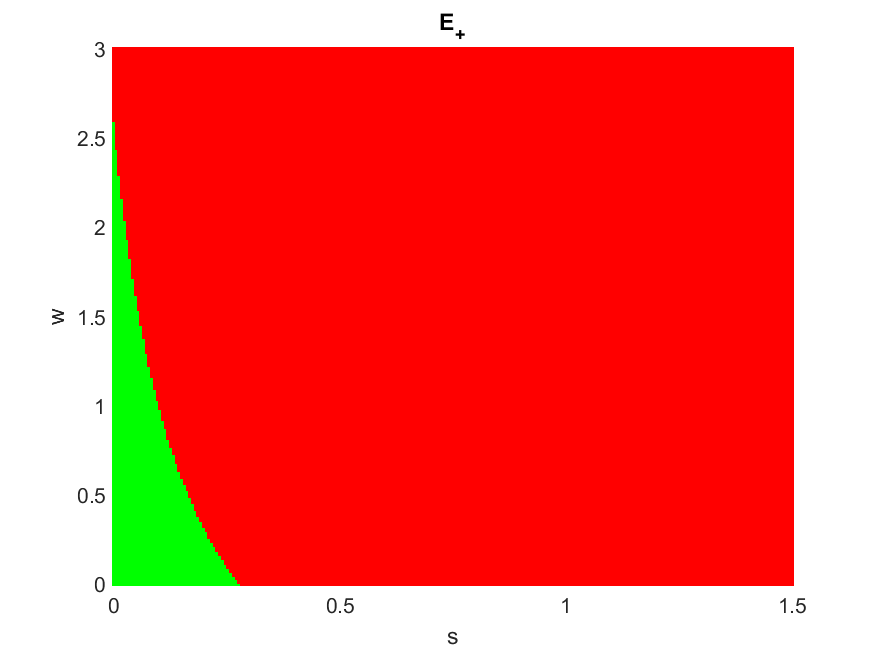}
  }
 \caption{Topological classification of $E_+(x_+,y_+)$ for. 
 $[~\vcenter{\hbox{$\greensqr$}}$ : sink; ~$\vcenter{\hbox{$\redsqr$}}$ : source; ~$\vcenter{\hbox{$\bluesqr$}}$ : saddle$~]$}
\end{figure}
\section{Bifurcation analysis}\label{sec_bif}
The Neimark-Sacker bifurcation occurs in a discrete system when a closed invariant curve is born from an equilibrium point, signalling a change in stability through a pair of complex eigenvalues with unit modulus. Utilizing the center manifold theorem and bifurcation theory \cite{xia2022, yousef2018, kuznetsov2004}, we systematically derive the normal form and establish the necessary conditions for Neimark-Sacker bifurcation. The parameter $\beta$ is considered to be the bifurcation parameter. From the Jacobian of system \eqref{sys5} near the positive fixed point, the characteristic polynomial is given by


\begin{equation*}
F(\lambda)=\lambda^2-\text{tr}(J)\lambda+\text{det}(J).
\end{equation*}
For Neimark-Sacker bifurcation, required conditions are $$\left(\text{tr}(J)\right)^2-4\;\text{det}(J)<0\;\;\&\;\; \text{det}(J)=1.$$
Hence, system \eqref{sys5} undergoes Neimark-Sacker (NS) bifurcation at $\beta=\beta_{NS}$, where $\beta$ varies in a small neighbourhood $\Omega$ defined by
$$\Omega=\left\{(s,w,\alpha,\beta,\theta):\left(\text{tr}(J)\right)^2-4\;\text{det}(J)<0,\;\text{det}(J)=1,\;\beta>\alpha\theta,\;\frac{\theta}{\beta-\alpha\theta}\neq s,\;\frac{\theta}{\beta-\alpha\theta}\neq 1\right\}.$$
So the system at $\beta=\beta_{NS}$ becomes
\begin{align}\label{sys_beta_ns}
x_{n+1} &= x_n\;\exp\left[(1-x_n)\left(\frac{x_n-s}{x_n+w}\right)-\frac{y_n}{1+\alpha x_n}\right],\nonumber\\
y_{n+1} &= y_n\;\exp\left[\frac{\beta_{NS}x_n}{1+\alpha x_n}-\theta\right].
\end{align}
Next, we assume a small perturbation of system \eqref{sys_beta_ns} by selecting \(\beta_*\) as the perturbation parameter in the following manner
\begin{align*}
x_{n+1} &= x_n\;\exp\left[(1-x_n)\left(\frac{x_n-s}{x_n+w}\right)-\frac{y_n}{1+\alpha x_n}\right],\nonumber\\
y_{n+1} &= y_n\;\exp\left[\frac{(\beta_{NS}+\beta_*)x_n}{1+\alpha x_n}-\theta\right].
\end{align*}
Let $u_n=x_n-x_+$, $v_n=y_n-y_+$, then the positive fixed point is shifted to the origin and the corresponding system is
\begin{align}\label{unvn_sys}
u_{n+1} &= (u_n+x_+)\;\exp\left[(1-u_n-x_+)\left(\frac{u_n+x_+-s}{u_n	x_+	w}\right)-\frac{v_n+y_+}{1+\alpha(u_n+x_+)}\right],\nonumber\\
v_{n+1} &= (v_n+y_+)\;\exp\left[\frac{(\beta_{NS}+\beta_*)(u_n+x_+)}{1+\alpha (u_n+x_+)}-\theta\right].
\end{align}
The characteristic equation associated with this model at $(0,0)$ is given by
\begin{equation*}
\lambda_2+M_1(\beta_*)\;\lambda+M_2(\beta_*)=0,
\end{equation*}
where $M_1(\beta_*)=-\text{tr}(J)|_{\beta=\beta_{NS}+\beta_*}$ and $M_2(\beta_*)=\text{det}(J)|_{\beta=\beta_{NS}+\beta_*}$. The roots of this characteristic equation are
\begin{equation*}
\lambda_{1,2}=\frac{1}{2}\left[-M_1(\beta_*)\pm i \sqrt{4M_2(\beta_*)-M_1^2(\beta_*)}\right],\;\; |\lambda_{1,2}|=M_2(\beta_*).
\end{equation*}
Now, we assume that
\begin{equation}\label{bif_cond_1}
\frac{d}{d\beta_*}|\lambda_{1,2}|_{\beta_*=0}\neq 0.
\end{equation}
According to the theory of Neimark-Sacker (NS) bifurcation \cite{kuznetsov2004}, it is required that at $\beta_*=0$, $\lambda_{1,2}^m\neq 1$ for $m=1,2,3,4$. This leads us to 
\begin{equation}\label{bif_cond_2}
M_1(0)\neq -2,0,1,2.
\end{equation}
The Taylor's series expansion of \eqref{unvn_sys} up to terms of order 3 produces the following model
\begin{align}
u_{n+1} &= s_{10}u_n+s_{01}v_n+s_{20}u_n^2+s_{11}u_nv_n+s_{02}v_n^2+s_{30}u_n^3+s_{21}u_n^2v_n+s_{12}u_nv_n^2+s_{03}v_n^3+O\left(\rho\right),\nonumber\\
v_{n+1} &= t_{10}u_n+t_{01}v_n+t_{20}u_n^2+t_{11}u_nv_n+t_{02}v_n^2+t_{30}u_n^3+t_{21}u_n^2v_n+t_{12}u_nv_n^2+t_{03}v_n^3+O\left(\rho\right),
\end{align}
where $\rho=(|u_n|+|v_n|)^3$, and
\begin{align*}
s_{10} &= 1 - x_+z_1, & t_{10} &= y_+z_2, \\
s_{01} &= \frac{x_+}{1+\alpha x_+}, & t_{01} &= 1,\\
s_{20} &= \frac{1}{2}x_+z_1^2-z_1-x_+z_3, & t_{20} &= -\frac{2\alpha\beta}{(1+\alpha x_+)^3} + \frac{1}{2}y_+z_2,\\
s_{11} &= -\frac{1}{(1+\alpha x_+)^2}-\frac{x_+z_1}{1+\alpha x_+}, & t_{11} &= z_2,\\
s_{02} &= \frac{x_+}{2(1+\alpha x_+)^2}, & t_{02} &= 0,\\
s_{03} &= \frac{x_+}{6(1+\alpha x_+)^3}, & t_{02} &= 0,\\
z_1 &= 1-\frac{(1+w)(s+w)}{(x_++w)^2}+\frac{\alpha y_+}{(1+\alpha x_+)^2},& z_2 &= \frac{\beta}{(1+\alpha x_+)^2},\\
z_3 &=\frac{(1+w)(s+w)}{(x_++w)^3}+\frac{\alpha^2y_+}{(1+\alpha x_+)^3}.& &
\end{align*}
Let $$J_1=\begin{pmatrix}
s_{10} & s_{01}\\
t_{10} & t_{01}
\end{pmatrix}.$$
Let the eigenvalues of the matrix be of the form $\lambda_{1,2}=\eta_1+i\eta_2$ with $|\eta_1+i\eta_2|=1$. Thus $$2\eta_1=s_{10}+t_{01}=-M(0)\quad \& \quad \eta_1^2+\eta_2^2=s_{10}t_{01}-s_{01}t_{10}=1.$$
Now, let us consider the following invertible matrix
\begin{equation}
M=\begin{pmatrix}
s_{01} & 0\\
\eta_1-s_{10} & \eta_2
\end{pmatrix}.
\end{equation}
Using the translation $(u_n,v_n)^T=M(X_n,Y_n)^T$, the model becomes
\begin{eqnarray*}
X_{n+1} &=& \eta_1X_n+\eta_2Y_n+\psi(X_n,Y_n)+O\left((|X_n|+|Y_n|)^3\right),\nonumber\\
Y_{n+1} &=& \eta_2X_n+\eta_1Y_n+\phi(X_n,Y_n)+O\left((|X_n|+|Y_n|)^3\right),
\end{eqnarray*}
where
\begin{eqnarray*}
\psi(X_n,Y_n) &=& \frac{1}{s_{01}}[\{s_{20}s^2_{01} + s_{11}s_{01}(\eta_1 - s_{10}) + s_{02}(\eta_1 - s_{10})^2\}X^2_n\\
&& +\{s_{11}s_{01}\eta_2 + 2\eta_2 s_{02}(\eta_1 - s_{10})\}X_nY_n\\
&& + s_{02}\eta_2^2Y^2_n + \{s_{30}s^3_{01} + s_{21}s^2_{01}(\eta_1 - s_{10}) + s_{12}s_{01}(\eta_1 - s_{10})^2\\
&& + s_{03}(\eta_1 - s_{10})^3\}X^3_n\\
&& + \{\eta_2s_{21}s^2_{01} + 2\eta_2s_{12}s_{01}(\eta_1 - s_{10})+3\eta_2s_{03}(\eta_1 - s_{10})^2\}X^2_nY_n\\
&& +\{\eta_2^2s_{12}s_{01} + 3\eta_2^2s_{03}(\eta_1 - s_{10})\}X_nY^2_n + \eta_2^3s_{03}Y^3_n],\\
\phi(X_n, Y_n) &=& \frac{1}{\eta_2s_{01}}[\{s^2_{01}s_{20}(s_{10} - \eta_1) + s^3_{01}t_{20} + s_{01}(\eta_1 - s_{10})(s_{11}(s_{10} - \eta_1)\\
&& +s_{01}t_{11}) + s_{02}(s_{10} - \eta_1)^3\}X^2_n + \{\eta_2s_{01}(s_{11}(s_{10} - \eta_1) + s_{01}t_{11})\\
&& -2\eta_2s_{02}(s_{10} - \eta_1)^2\}X_nY_n + \eta_2^2s_{02}(s_{10} - \eta_1)Y^2_n\\
&& +\{s_{30}s^3_{01}(s_{10} - \eta_1) + t_{30}s^4_{01} + s^2_{01}(\eta_1 - s_{10})(s_{21}(s_{10} - \eta_1) + s_{01}t_{21})\\
&& -s_{12}s_{01}(\eta_1 - s_{10})^3 - s_{03}(\eta_1 - s_{10})^4\}X^3_n + \{\eta_2s^2_{01}(s_{21}(s_{10} - \eta_1) + s_{01}t_{21})\\
&& -2\eta_2s_{01}s_{12}(s_{10} - \eta_1)^2 + 3\eta_2s_{03}(s_{10} - \eta_1)^3\}X^2_nY_n\\
&& +\{\eta_2^2s_{01}s_{12}(s_{10} - \eta_1) - 3\eta_2^2s_{03}(s_{10} - \eta_1)^2\}X_nY^2_n\\
&& +\eta_2^3s_{03}(s_{10} - \eta_1)Y^3_n].
\end{eqnarray*}
To ensure the occurrence of NS-bifurcation, we need to have the following quantity
\begin{equation}
\sigma^*=-\text{Re}\left[\frac{(1-2\lambda)\bar{\lambda}^2}{1-\lambda}\gamma_{11}\gamma_{20}\right]-\frac{1}{2}|\gamma_{11}|^2-|\gamma_{02}|^2+\text{Re}(\bar{\lambda}\gamma_{21})
\end{equation}
not equal to zero, where
\begin{eqnarray*}
\gamma_{20}&=& \frac{1}{8}[(\psi_{X_nX_n}-\psi_{Y_nY_n}+2\phi_{X_nY_n})+i(\phi_{X_nX_n}-\phi_{Y_nY_n}-2\psi_{X_nY_n})],\\
\gamma_{11}&=& \frac{1}{4}[(\psi_{X_nX_n}+\psi_{Y_nY_n})+i(\phi_{X_nX_n}+\phi_{Y_nY_n})],\\
\gamma_{02}&=& \frac{1}{8}[(\psi_{X_nX_n}-\psi_{Y_nY_n}-2\phi_{X_nY_n})+i(\phi_{X_nX_n}-\phi_{Y_nY_n}+2\psi_{X_nY_n})],\\
\gamma_{21}&=& \frac{1}{16}[(\psi_{X_nX_nX_n}+\psi_{X_nY_nY_n}+\phi_{X_nX_nY_n}+\phi_{Y_nY_nY_n})\\
& & \quad +i(\phi_{X_nX_nX_n}+\phi_{X_nY_nY_n}-\psi_{X_nX_nY_n}-\psi_{Y_nY_nY_n})].
\end{eqnarray*}
Hence, we derive the following conclusion based on the preceding analysis and the Neimark-Sacker theorem.
\begin{theorem}
When conditions \eqref{bif_cond_1} and \eqref{bif_cond_2} are satisfied with $\sigma^*=0$, the model \eqref{sys5} exhibits a Hopf bifurcation at the equilibrium point $E_+(x_+, y_+)$ as the parameter $\beta$ varies within a small neighborhood of $\Omega$. Additionally, if $\sigma^*$ is less than $0$, an attracting invariant closed curve emerges from the fixed point $E_+(x_+, y_+)$ when $\beta>\beta_{NS}$, while if $\sigma^*$ is greater than 0, a repelling invariant closed curve arises from the fixed point $E_+(x_+, y_+)$ when $\beta < \beta_{NS}$.
\end{theorem}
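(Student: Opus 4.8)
The plan is to obtain the theorem as a direct application of the Neimark--Sacker (NS) bifurcation theorem in the form of \cite{kuznetsov2004}, applied to the one-parameter family \eqref{unvn_sys}, with $\sigma^*$ serving as the first Lyapunov coefficient whose sign fixes the criticality of the bifurcation. First I would record the critical multiplier configuration along $\beta_*=0$: on $\Omega$ we have $\det(J)=1$ and $(\mathrm{tr}\,J)^2-4\det(J)<0$, so the multipliers $\lambda_{1,2}(\beta_*)$ are a complex-conjugate pair with $|\lambda_{1,2}(0)|^2=M_2(0)=1$. Condition \eqref{bif_cond_1} is exactly the transversality (eigenvalue-crossing) hypothesis $\frac{d}{d\beta_*}|\lambda_{1,2}|\big|_{\beta_*=0}\neq 0$, and condition \eqref{bif_cond_2}, i.e.\ $M_1(0)\notin\{-2,0,1,2\}$, is equivalent to the strong non-resonance requirement $\lambda_{1,2}^{m}\neq 1$ for $m=1,2,3,4$ (the excluded values of $M_1(0)$ are precisely those for which $\lambda_{1,2}$ is a root of unity of order at most $4$). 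These are the standing hypotheses of the NS theorem.

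Next I would carry out the center-manifold / normal-form reduction already set up in the excerpt. After shifting $E_+$ to the origin, Taylor-expanding \eqref{unvn_sys} to third order, and conjugating the linear part by the invertible matrix $M$, the map takes the planar ``rotation-type linear part plus higher-order terms'' form in $(X_n,Y_n)$ with nonlinear parts $\psi,\phi$. I would then pass to the complex coordinate $z_n=X_n+iY_n$, so that the map reads $z_{n+1}=\lambda z_n+\sum_{2\le k+l\le 3}\frac{1}{k!\,l!}\,g_{kl}\,z_n^{k}\bar z_n^{\,l}+O(|z_n|^4)$, and read off $g_{20},g_{11},g_{02},g_{21}$ from the quadratic and cubic coefficients of $\psi$ and $\phi$; up to the usual combinatorial factors these are the quantities $\gamma_{20},\gamma_{11},\gamma_{02},\gamma_{21}$ listed before the statement, expressed through the partials $\psi_{X_nX_n},\phi_{X_nY_n},\dots$ and hence through $s_{ij},t_{ij}$ and ultimately through $x_+,y_+,s,w,\alpha,\theta$. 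Substituting these into the displayed expression for $\sigma^*$ produces the first Lyapunov coefficient.

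The conclusion then follows from the NS theorem: provided the crossing condition \eqref{bif_cond_1}, the non-resonance condition \eqref{bif_cond_2}, and the nondegeneracy condition $\sigma^*\neq 0$ all hold, a unique closed invariant curve bifurcates from $E_+$; it is attracting (supercritical NS bifurcation) when $\sigma^*<0$ and repelling (subcritical) when $\sigma^*>0$, and the side of $\beta_{NS}$ on which the curve appears is determined jointly by $\operatorname{sgn}(\sigma^*)$ and the sign of the transversality derivative, which for \eqref{sys5} gives emergence for $\beta>\beta_{NS}$ in the attracting case and for $\beta<\beta_{NS}$ in the repelling case, exactly as stated. The main obstacle is purely computational: the explicit differentiation of the quadratic and cubic forms defining $\psi$ and $\phi$ and the ensuing algebraic simplification of $\gamma_{20},\gamma_{11},\gamma_{02},\gamma_{21}$ and of $\sigma^*$ in terms of the model parameters. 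This is long and error-prone rather than conceptually difficult; I would reduce the risk of error by computing the coefficients $g_{kl}$ directly in the complex coordinate rather than routing through $\psi$ and $\phi$, and I would verify $\sigma^*\neq 0$ numerically for the parameter sets used in Section \ref{sec_num}.
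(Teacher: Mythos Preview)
Your proposal is correct and follows essentially the same route as the paper: the theorem is obtained by invoking the Neimark--Sacker bifurcation theorem of \cite{kuznetsov2004} once the preceding normal-form computation has verified the transversality condition \eqref{bif_cond_1}, the non-resonance condition \eqref{bif_cond_2}, and the nondegeneracy $\sigma^*\neq 0$ (the ``$\sigma^*=0$'' in the statement is evidently a typographical slip). The paper does not supply a separate proof beyond this, so your outline in fact matches and slightly expands on what is done there.
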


\section{Numerical simulations}\label{sec_num}
In this section, we conduct a numerical analysis of system \eqref{sys5}. Here, we investigate the asymptotic behaviour of the system and describe how it behaves with respect to changes in key system parameters. Our observations reveal that the system undergoes Neimark-Sacker bifurcation for the parameters \(s,\; w,\; \alpha,\; \beta\;\&\;\theta\). We provide bifurcation diagrams alongside phase portraits illustrating the system dynamics. The considered parameter values for the numerical simulation are \[r=1.293,\; K=4,\; p=0.05,\; q=0.5,\; a=0.7,\; h=3,\; b=0.6\;\; \& \;\; c=0.168.\] After applying the transformations used to obtain system \eqref{sys5}, these transform into the following parameter values: \[s=0.0125,\; w=0.125,\; \alpha=8.4,\; \beta=1.3\;\; \& \;\; \theta=0.13.\] The fixed points are evaluated to be $(0,0)$, $(0.0125,0)$, $(1,0)$ and $E_+=(0.625,1.91406)$. The positive fixed point is denoted by a red dot $({\color{red}\Large \cdot})$ in the phase portraits.
\begin{figure}[H]
 \subfloat[bifurcation diagrams]{%
  \includegraphics[width=0.33\textwidth, height=0.3\textwidth]{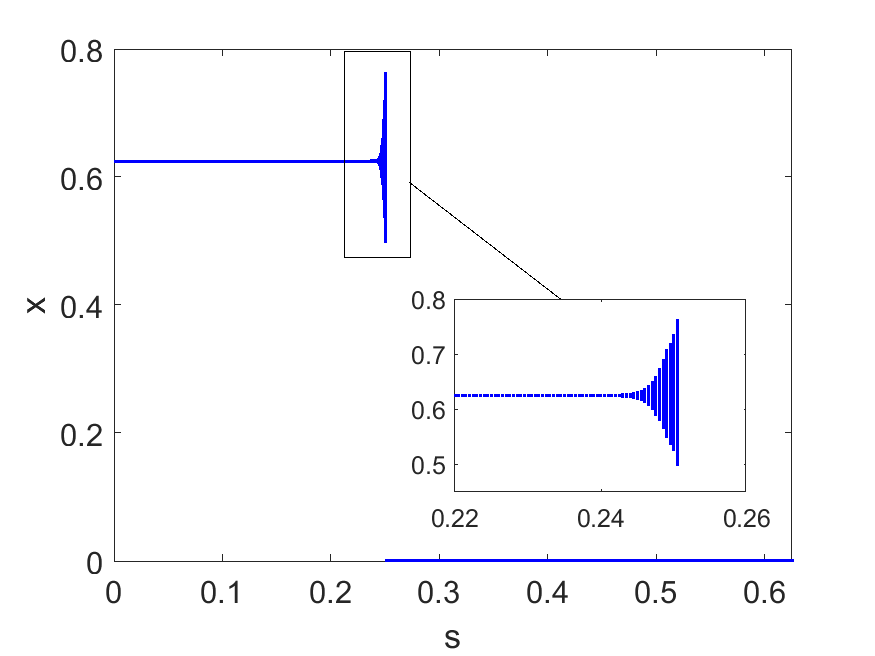}
  \includegraphics[width=0.33\textwidth, height=0.3\textwidth]{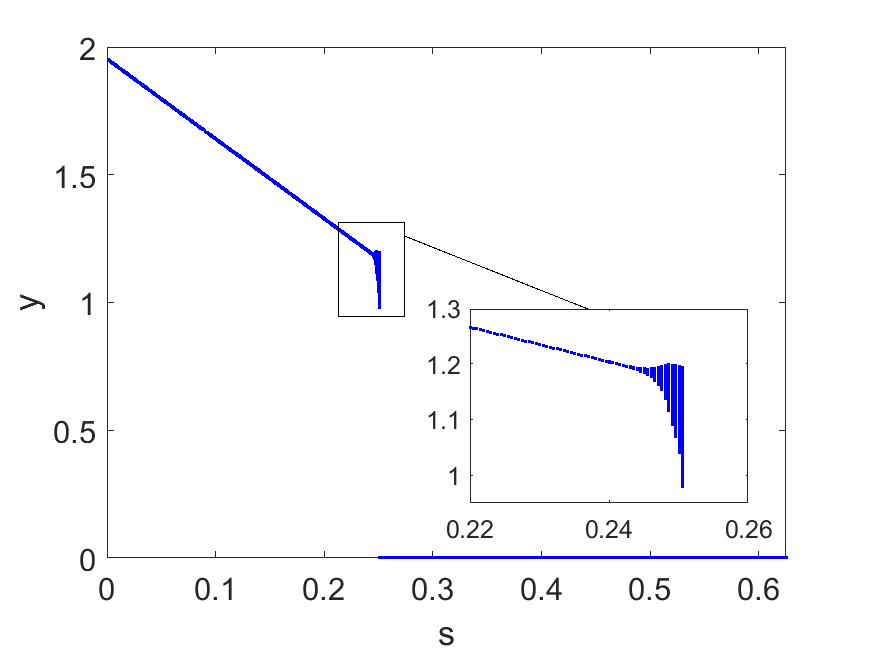}}
  \subfloat[phase portrait for $s<s_{NS}$]{%
  \includegraphics[width=0.33\textwidth, height=0.3\textwidth]{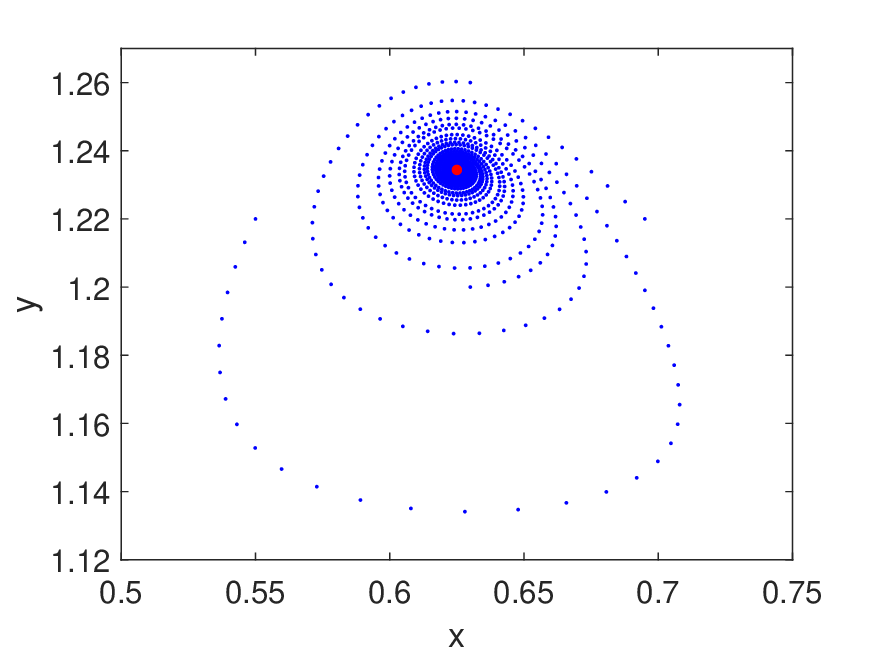}}\\
 \subfloat[phase portrait for $s=s_{NS}$]{%
  \includegraphics[width=0.33\textwidth, height=0.3\textwidth]{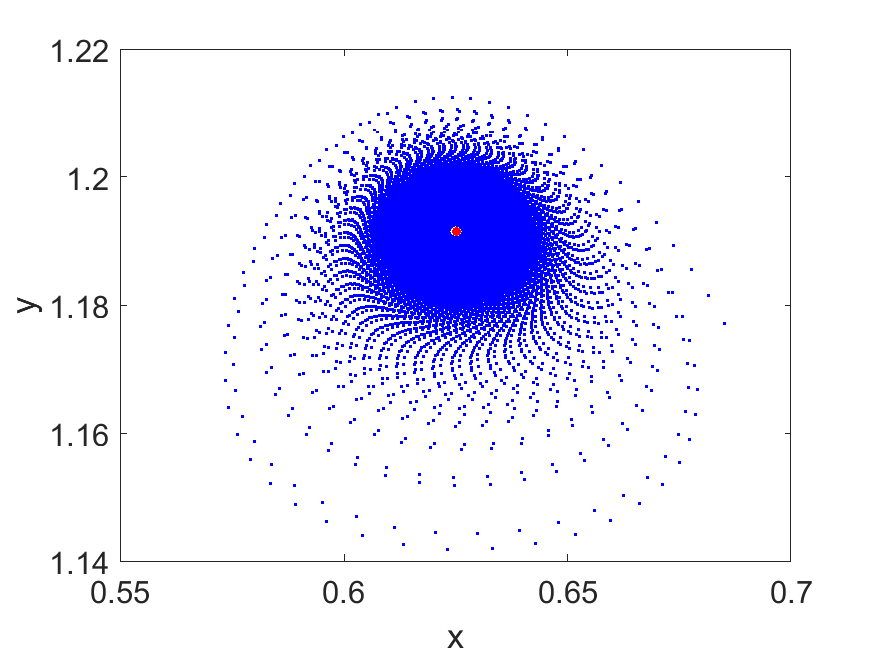}}
 \subfloat[phase portrait for $s>s_{NS}$]{%
  \includegraphics[width=0.33\textwidth, height=0.3\textwidth]{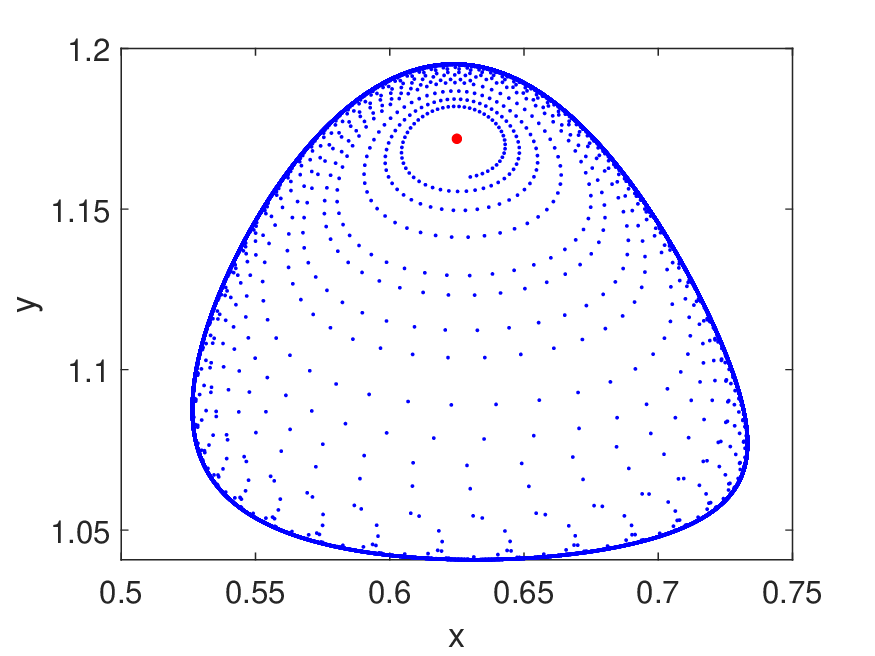}}
 \subfloat[phase portrait for $s>s_{th}$]{%
  \includegraphics[width=0.33\textwidth, height=0.3\textwidth]{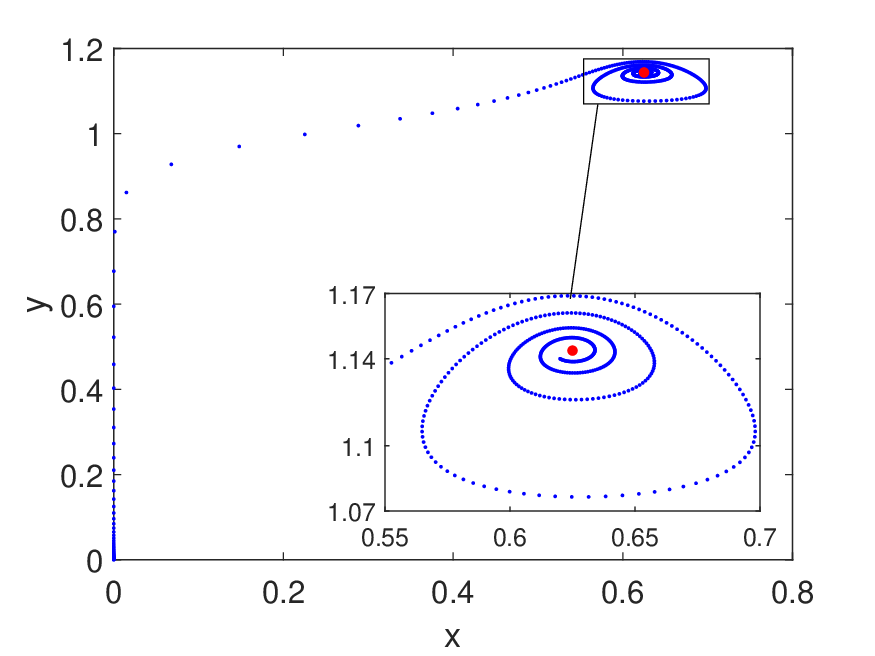}} 
 \caption{{\bf a.~}System \eqref{sys5} undergoes a Neimark-Sacker bifurcation when $s$ crosses the bifurcation point $s_{NS}=0.243716$ for $w=0.125$, $\alpha=8.4$, $\beta=1.3$ \& $\theta=0.13$ {~\bf b.~}$E_+$ is stable (sink) for $s=0.23<s_{NS}$ {~\bf c.~}$E_+$ is stable for $s=s_{NS}=0.243716$ {~\bf d.~}$E_+$ is unstable (source) \& a stable limit cycle emerges when $s=0.25>s_{NS}$ {~\bf e.~}both species become extinct for $s>s_{th}=0.259$}
 \label{s_bif}
\end{figure}
Figure \ref{s_bif} shows the occurrence of Neimark-Sacker bifurcation in system \eqref{sys5} with respect to the strong Allee threshold value $(s)$. The bifurcation point is obtained to be $s_{NS}=0.243716$. For values of $s$ less than $s_{NS}$, the positive fixed point $E_+$ becomes locally asymptotically stable (sink), whereas, for values of $s$ greater than $s_{NS}$, the system possesses a stable limit cycle until $s$ reaches the threshold value $s_{th}=0.259$. In this case, the fixed point becomes unstable (source). When $s$ crosses the threshold value $s_{th}$, both species become extinct since the origin becomes the only stable state in the phase space.
\begin{figure}[H]
 \subfloat[bifurcation diagrams]{%
  \includegraphics[width=0.33\textwidth, height=0.3\textwidth]{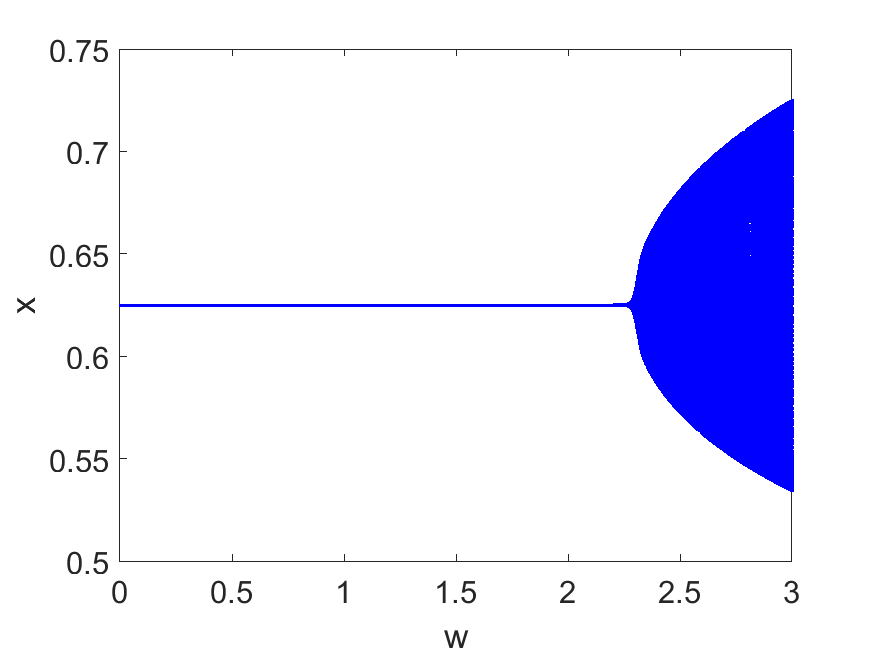}
  \includegraphics[width=0.33\textwidth, height=0.3\textwidth]{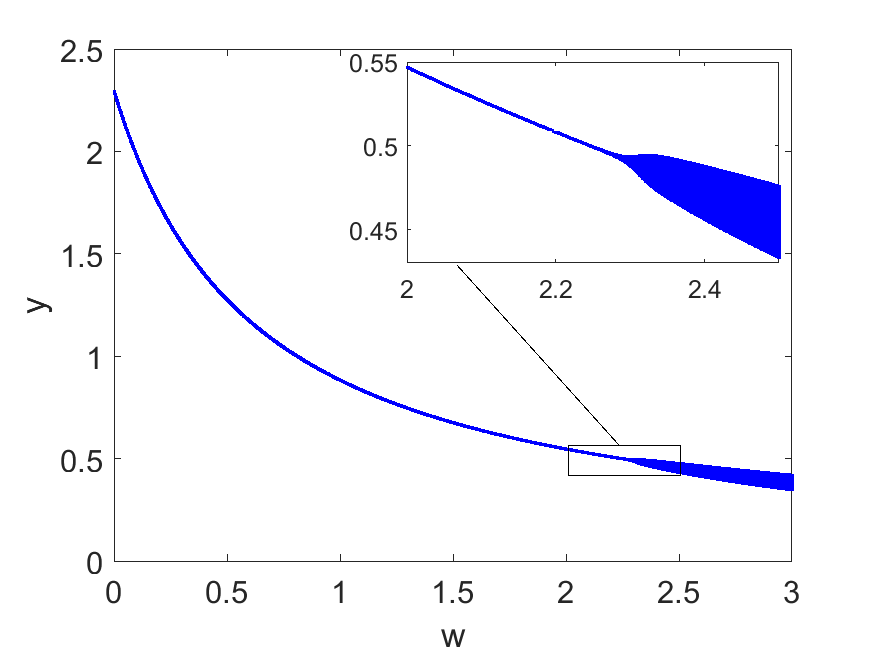}}\\
  \subfloat[phase portrait for $w<w_{NS}$]{%
  \includegraphics[width=0.33\textwidth, height=0.3\textwidth]{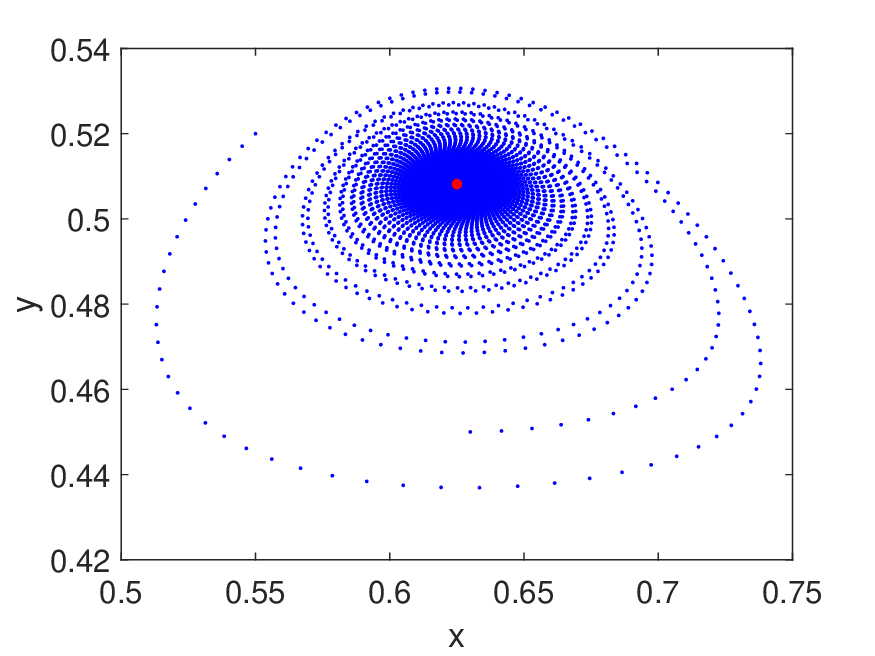}}
 \subfloat[phase portrait for $w=w_{NS}$]{%
  \includegraphics[width=0.33\textwidth, height=0.3\textwidth]{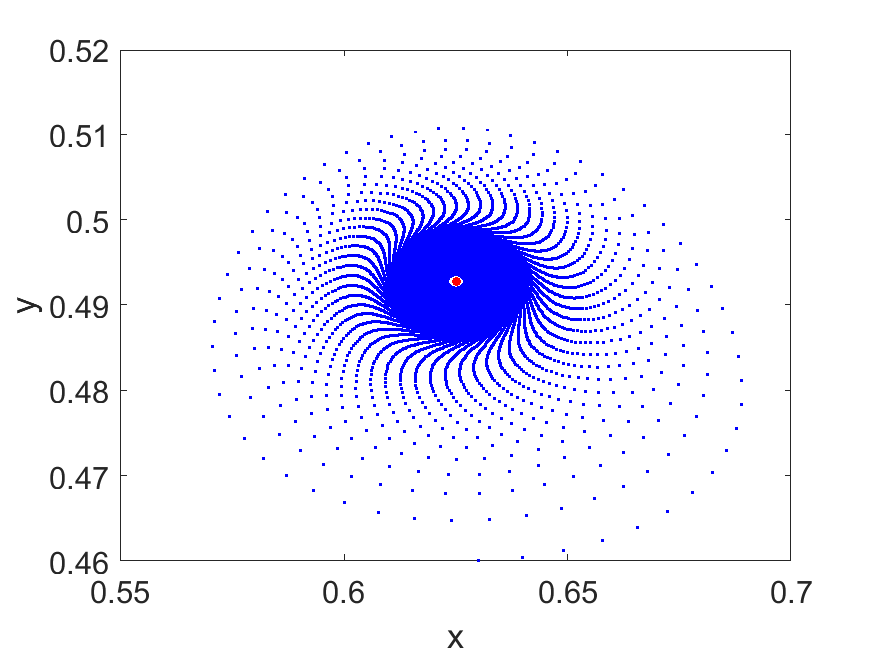}}
 \subfloat[phase portrait for $w>w_{NS}$]{%
  \includegraphics[width=0.33\textwidth, height=0.3\textwidth]{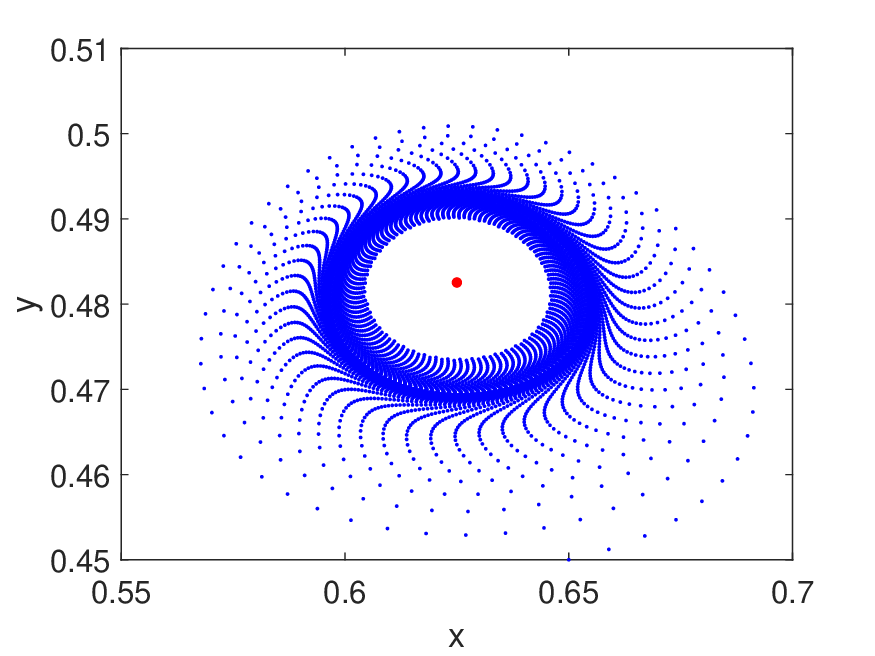}}
 \caption{{\bf a.~}System \eqref{sys5} undergoes a Neimark-Sacker bifurcation when $w$ crosses the bifurcation point $w_{NS}$ for $s=0.0125$, $\alpha=8.4$, $\beta=1.3$ \& $\theta=0.13$ {~\bf b.~} $E_+$ is stable for $w=2.2<w_{NS}$ {~\bf c.~} $E_+$ is stable at the bifurcation point $w=w_{NS}=2.288189$ {~\bf d.~} $E_+$ becomes unstable and a stable limit cycle emerges for $w=2.35>w_{NS}$}
 \label{w_bif}
\end{figure}
In Fig. \ref{w_bif}, the bifurcation diagrams illustrate the occurrence of Neimark-Sacker bifurcation in system \eqref{sys5} as the weak Allee constant $(w)$ crosses the bifurcation point $w_{NS}=2.288189$. The phase portraits demonstrate the emergence of a stable limit cycle in the phase space for $w>w_{NS}$. This implies that, in the long term, the population tends to stabilize at the co-existent fixed point for $w<w_{NS}$, whereas the densities tend to oscillate periodically when $w>w_{NS}$.

Figs. \ref{alpha_bif}, \ref{beta_bif}, and \ref{theta_bif} exhibit a similar phenomenon to Fig. \ref{s_bif}. Neimark-Sacker bifurcation occurs in the system as the parameters $\alpha$, $\beta$, and $\theta$ vary and cross the bifurcation points $\alpha_{NS}$, $\beta_{NS}$, and $\theta_{NS}$, respectively. These parameters also possess threshold values beyond which the populations cease to exist. The threshold values are $\alpha_{th}$ and $\theta_{th}$ for $\alpha$ and $\theta$ below which the populations collapse, and $s_{th}$ and $\beta_{th}$ for $s$ and $\beta$ above which species extinction occurs. This phenomenon can be attributed to the presence of multiple Allee effects within the prey population. For example, in Fig. \ref{alpha_bif}, as $\alpha$ decreases from $\alpha_{NS}=8.048817$, the amplitude of the limit cycle increases. Despite a critical population density of $s=0.0125$ in the simulation, Fig. \ref{alpha_bif_xy} shows that the minimum value attained by $x$ in limit cycles exceeds $0.2$. This indicates that population extinction can happen even when the density surpasses the critical level significantly. Hence, it suggests that multiple Allee effects can notably elevate the critical threshold level.
\begin{figure}[H]
 \subfloat[bifurcation diagrams]{%
  \includegraphics[width=0.33\textwidth, height=0.3\textwidth]{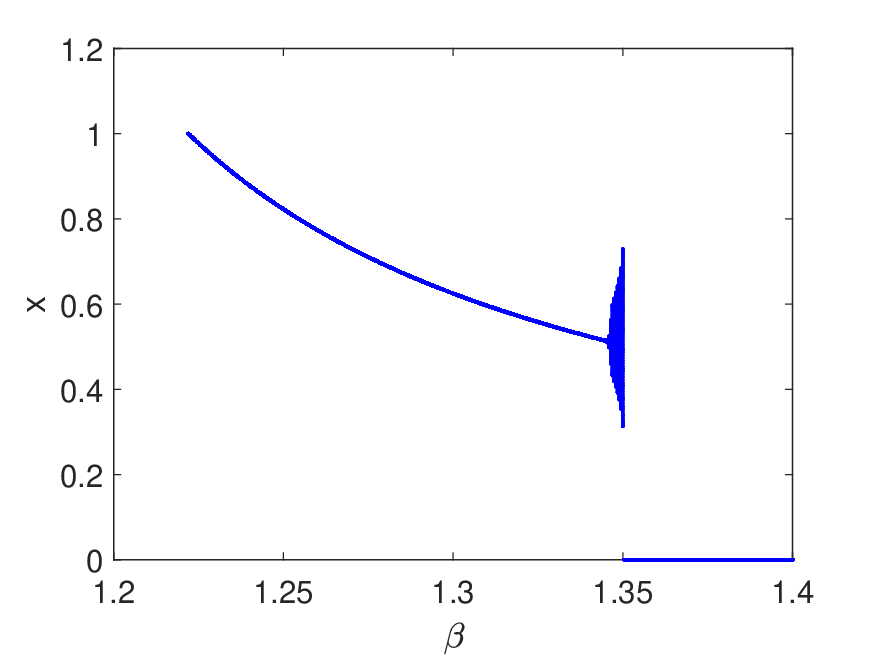}
  \includegraphics[width=0.33\textwidth, height=0.3\textwidth]{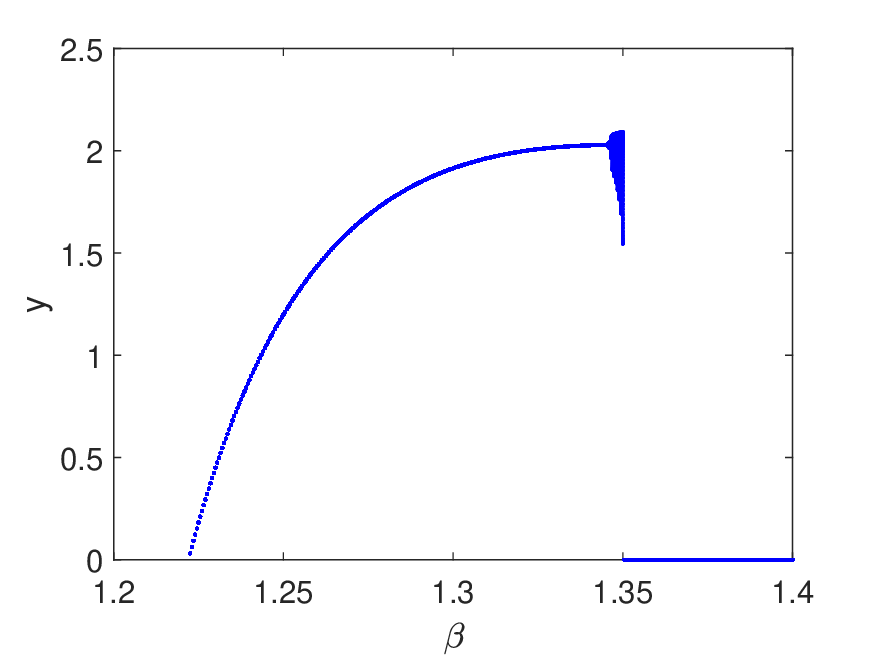}}
 \subfloat[phase portrait for $\beta<\beta_{NS}$]{%
  \includegraphics[width=0.3\textwidth, height=0.3\textwidth]{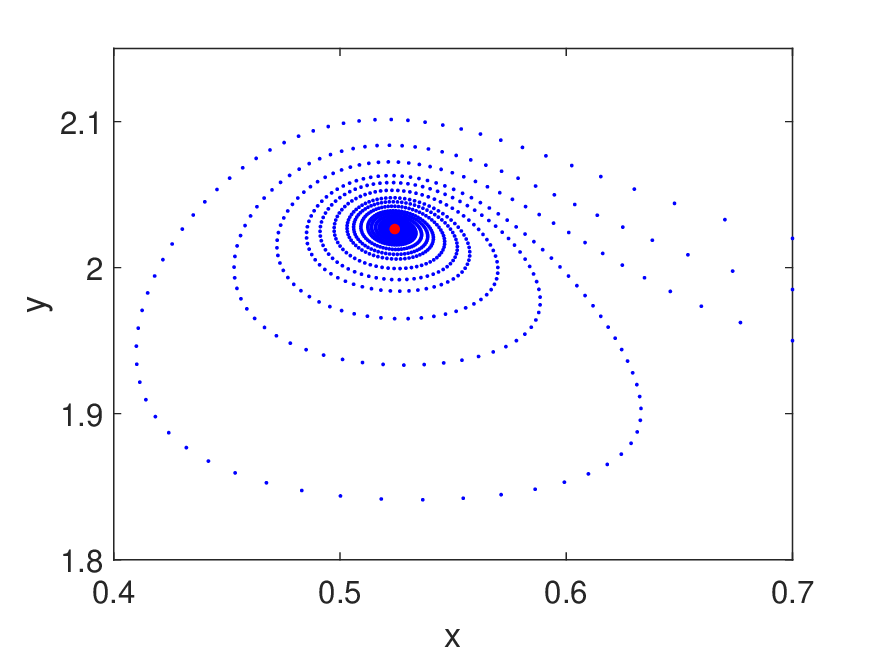}}\\
 \subfloat[phase portrait for $\beta=\beta_{NS}$]{%
  \includegraphics[width=0.33\textwidth, height=0.3\textwidth]{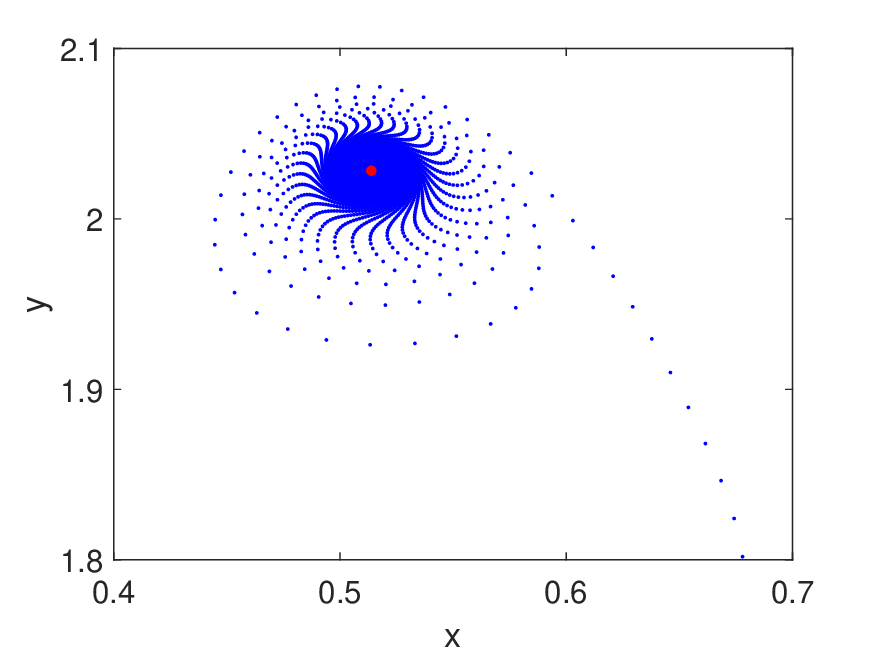}}
 \subfloat[phase portrait for $\beta>\beta_{NS}$]{%
  \includegraphics[width=0.33\textwidth, height=0.3\textwidth]{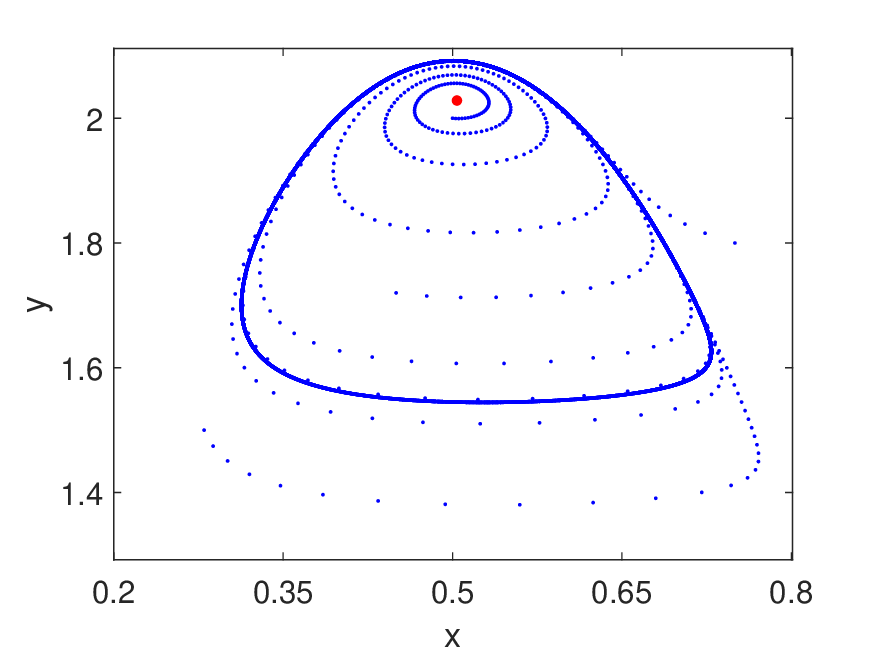}}
 \subfloat[phase portrait for $\beta>\beta_{th}$]{%
  \includegraphics[width=0.33\textwidth, height=0.3\textwidth]{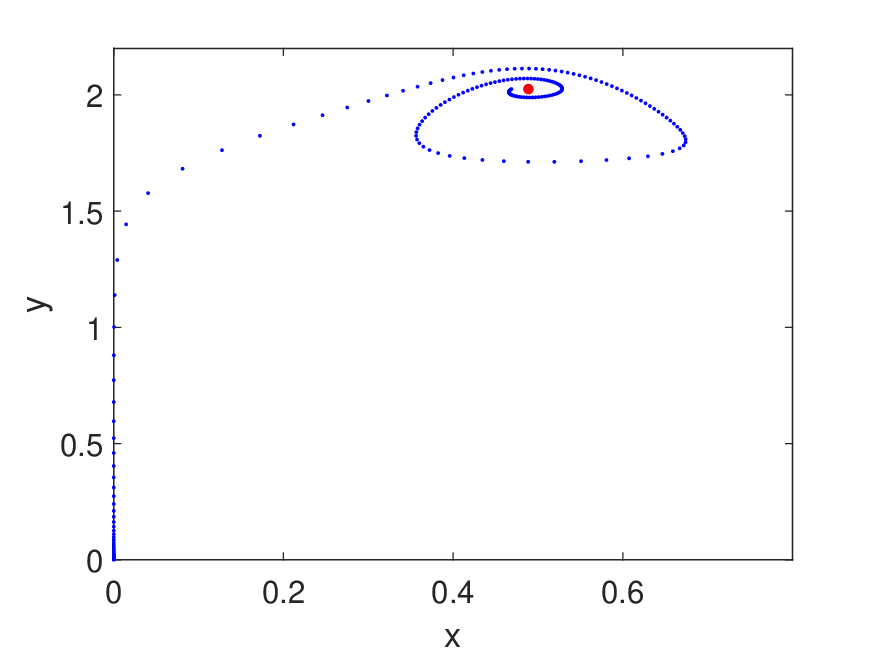}} 
 \caption{{\bf a.~}System \eqref{sys5} undergoes a Neimark-Sacker bifurcation when $\beta$ crosses the bifurcation point $\beta_{NS}$ for $s=0.0125$, $w=0.125$, $\alpha=8.4$, \& $\theta=0.13$ {~\bf b.~} $E_+$ is asymptotically stable for $\beta=1.34<\beta_{NS}$ {~\bf c.~} $E_+$ is stable at $\beta=\beta_{NS}=1.344995$ {~\bf d.~} $E_+$ is unstable and a stable limit cycle emerges for $\beta=1.35>\beta_{NS}$ {~\bf e.~} $(0,0)$ is the only stable in the phase space for $\beta>\beta_{th}$}
 \label{beta_bif}
\end{figure}

\begin{figure}[H]
 \subfloat[bifurcation diagrams\label{alpha_bif_xy}]{%
  \includegraphics[width=0.33\textwidth, height=0.3\textwidth]{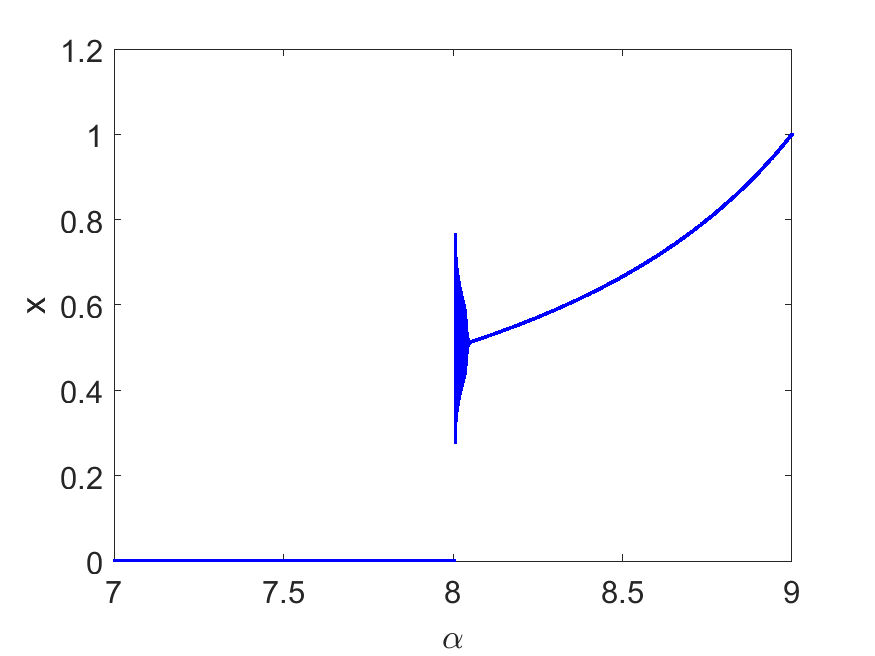}
  \includegraphics[width=0.33\textwidth, height=0.3\textwidth]{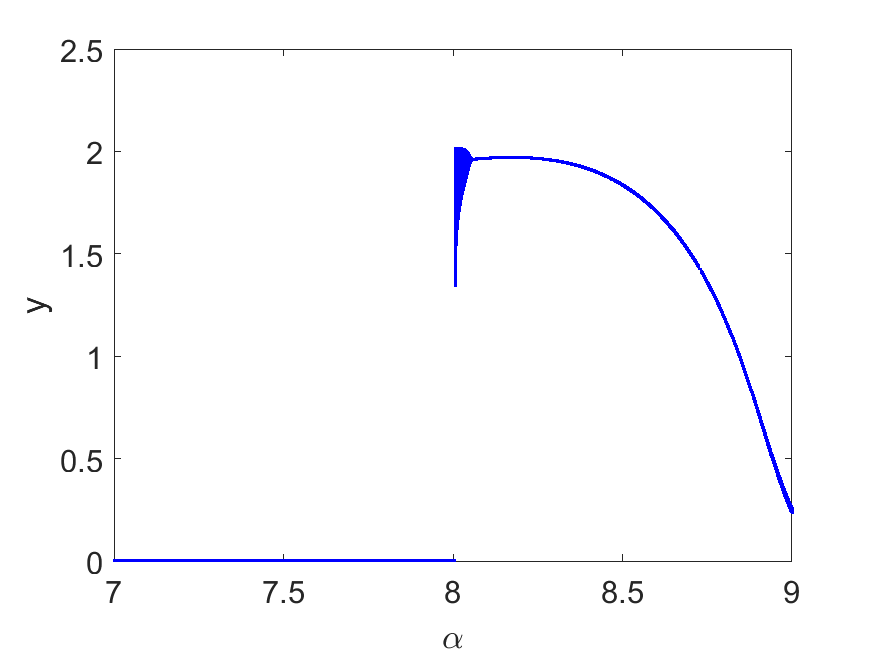}}
  \subfloat[phase portrait for $\alpha<\alpha_{th}$\label{alpha_b}]{%
  \includegraphics[width=0.33\textwidth, height=0.3\textwidth]{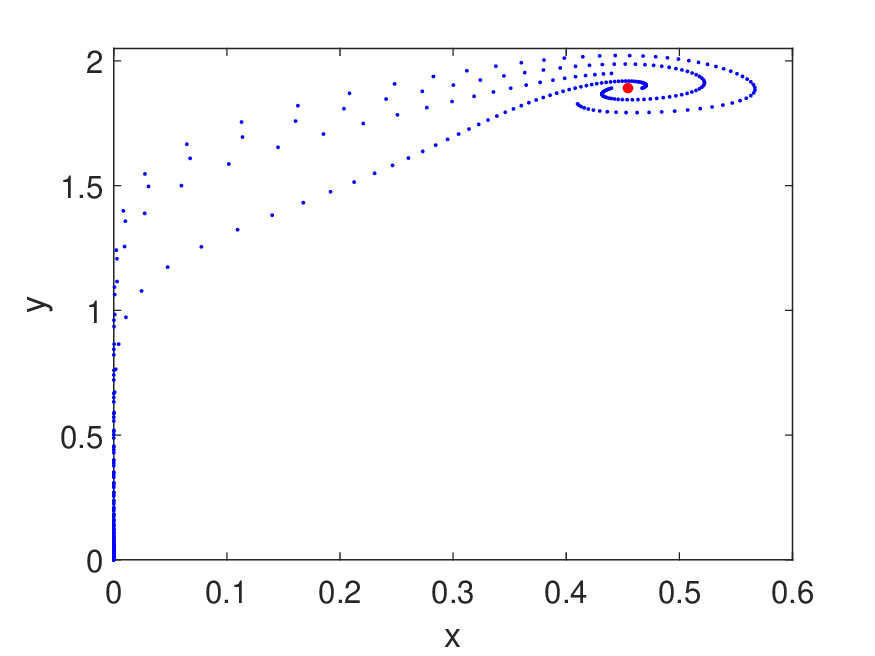}}\\
 \subfloat[phase portrait for $\alpha_{th}<\alpha<\alpha_{NS}$]{%
  \includegraphics[width=0.33\textwidth, height=0.3\textwidth]{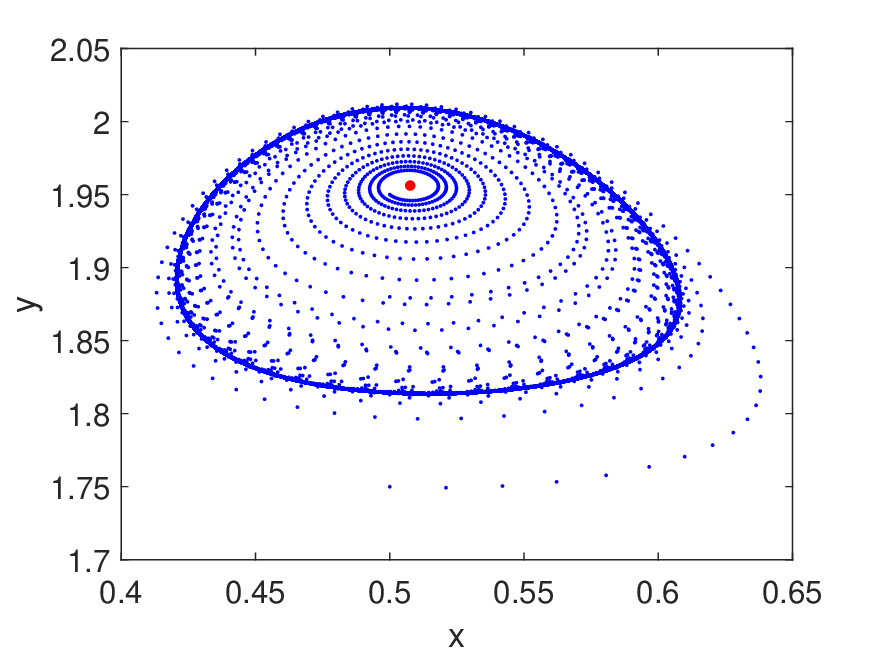}}
 \subfloat[phase portrait for $\alpha=\alpha_{NS}$]{%
  \includegraphics[width=0.33\textwidth, height=0.3\textwidth]{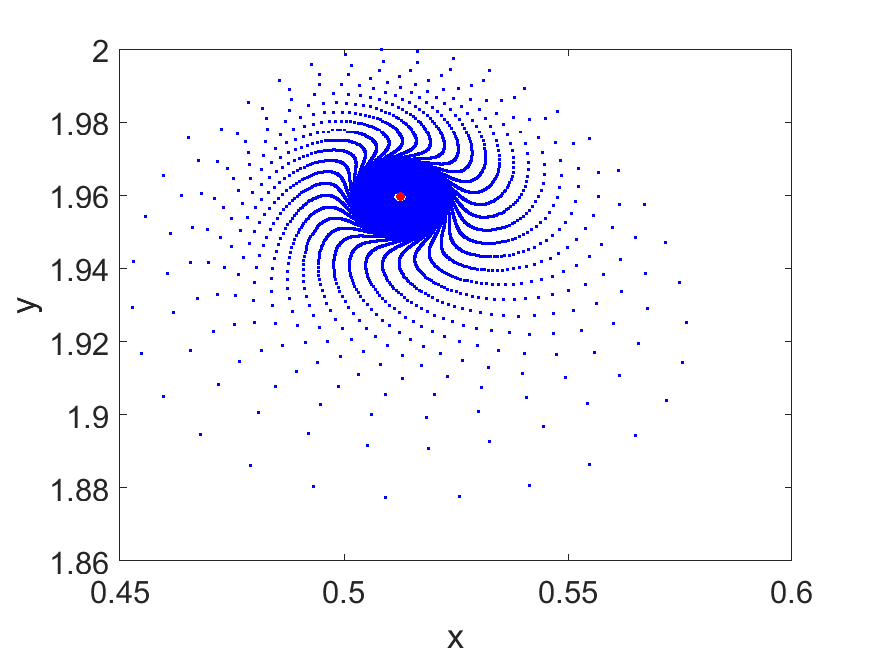}} 
 \subfloat[phase portrait for $\alpha>\alpha_{NS}$]{%
  \includegraphics[width=0.33\textwidth, height=0.3\textwidth]{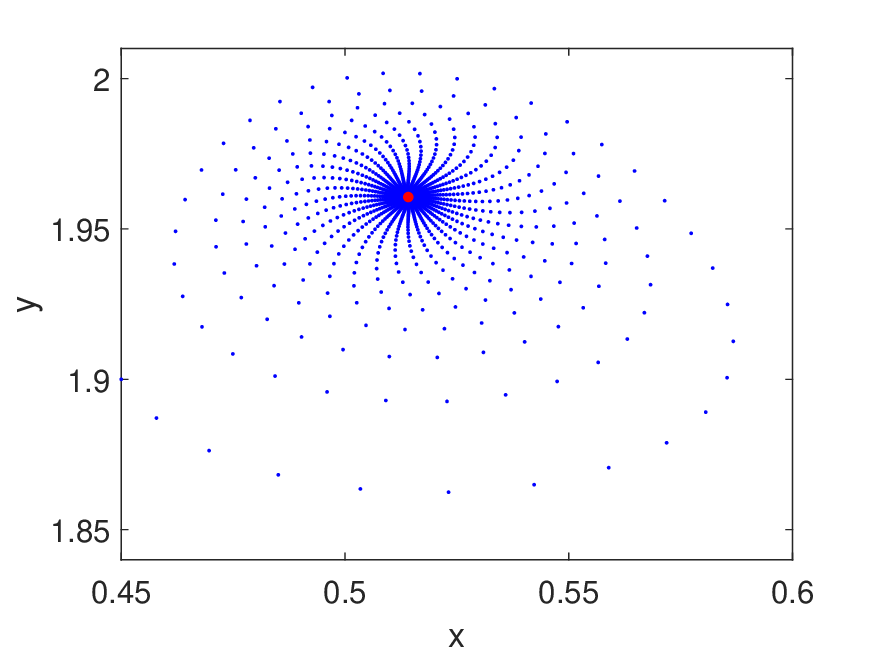}} 
 \caption{{\bf a.~}System \eqref{sys5} undergoes a Neimark-Sacker bifurcation when $\alpha$ crosses the bifurcation point $\alpha_{NS}$ for $s=0.0125$, $w=0.125$, $\beta=1.3$ \& $\theta=0.13$ {~\bf b.~} $(0,0)$ is the only stable state in the phase space for $\alpha=7.8<\alpha_{th}$ {~\bf c.~} $E_+$ is unstable and a stable limit cycle exists for $\alpha_{th}<\alpha=8.03<\alpha_{NS}$ {~\bf d.~} $E_+$ is stable at $\alpha=\alpha_{NS}=8.048817$ {~\bf e.~} $E_+$ is stable for $\alpha=8.06>\alpha_{NS}$}
 \label{alpha_bif}
\end{figure}

\begin{figure}[H]
 \subfloat[bifurcation diagrams]{%
  \includegraphics[width=0.33\textwidth, height=0.3\textwidth]{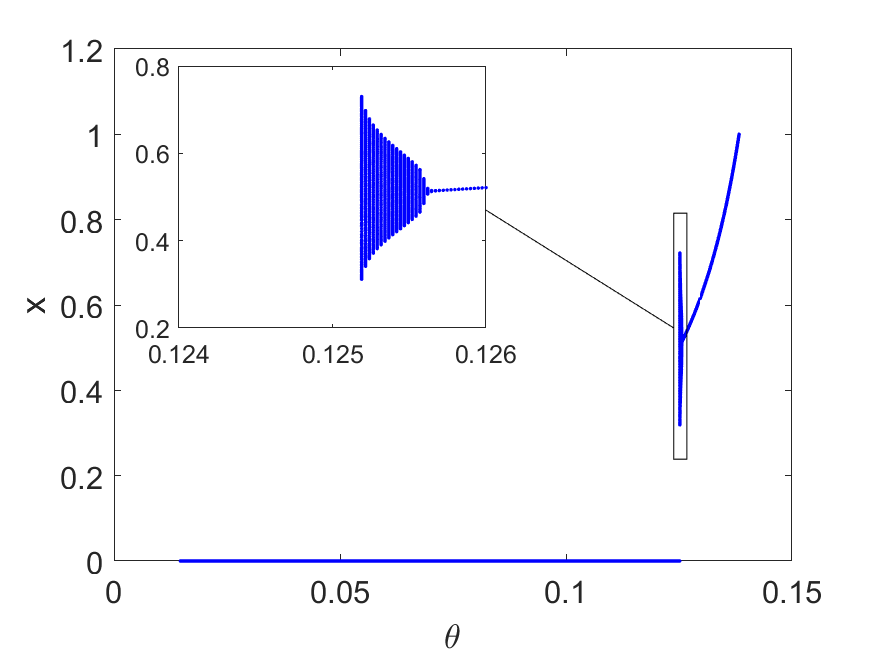}
  \includegraphics[width=0.33\textwidth, height=0.3\textwidth]{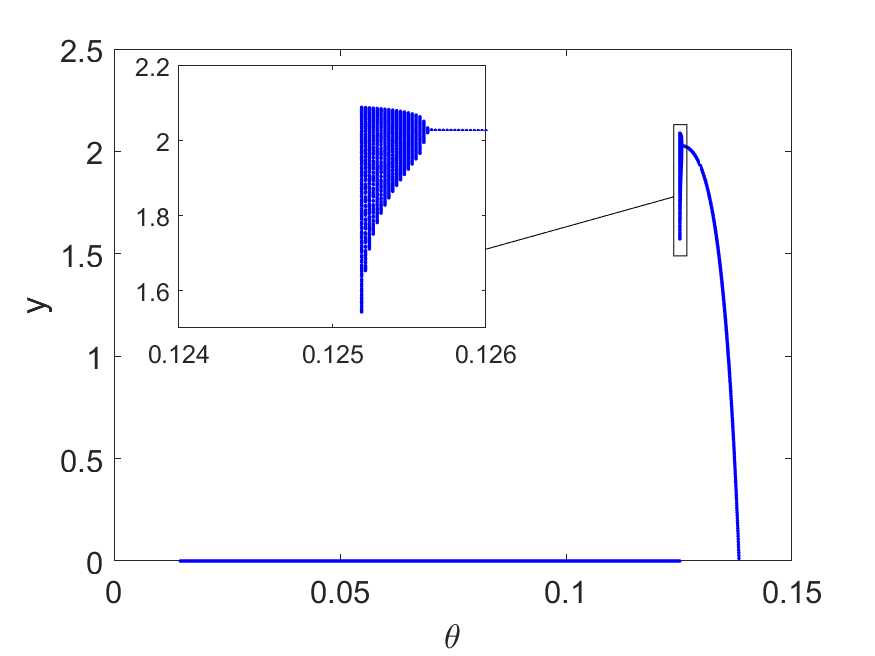}}
  \subfloat[phase portrait for $\theta<\theta_{th}$]{%
  \includegraphics[width=0.33\textwidth, height=0.3\textwidth]{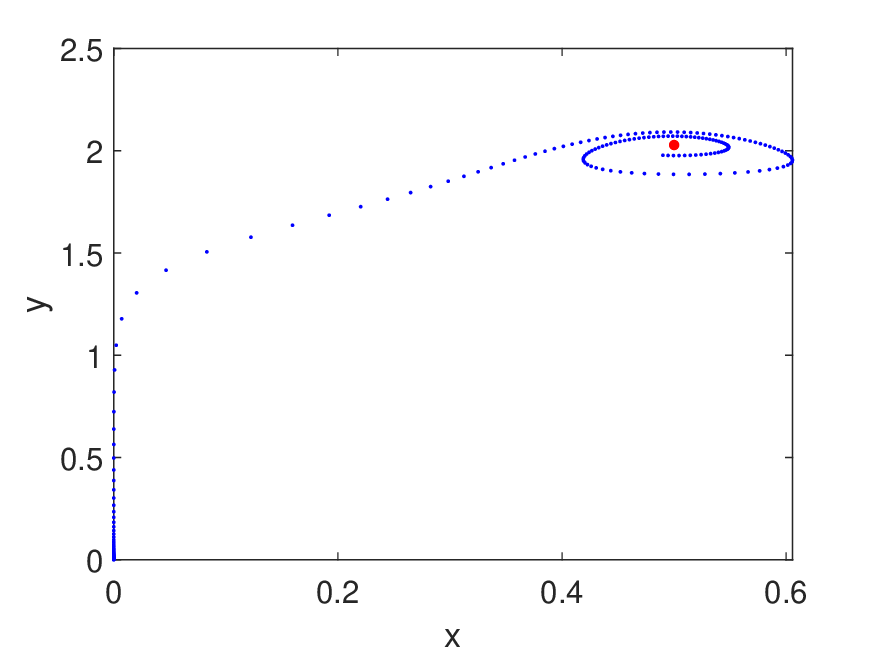}}\\
 \subfloat[phase portrait for $\theta_{th}<\theta<\theta_{NS}$]{%
  \includegraphics[width=0.33\textwidth, height=0.3\textwidth]{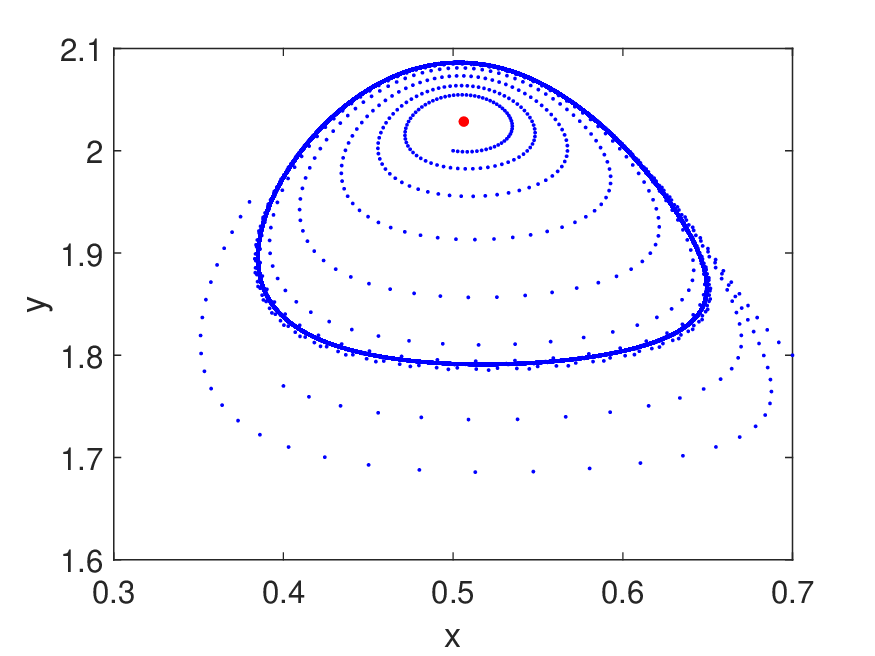}}
 \subfloat[phase portrait for $\theta=\theta_{NS}$]{%
  \includegraphics[width=0.33\textwidth, height=0.3\textwidth]{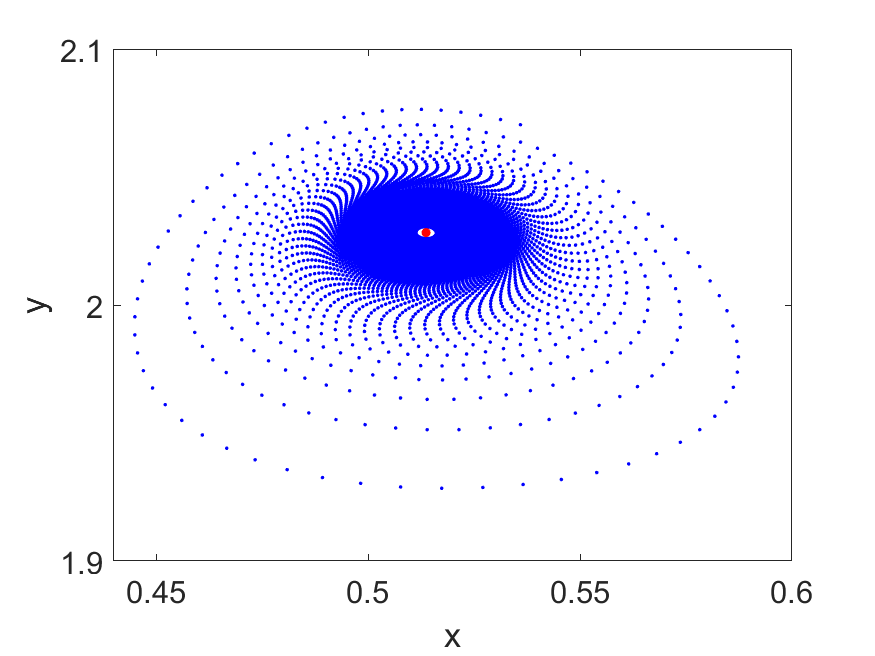}}
  \subfloat[phase portrait for $\theta>\theta_{NS}$]{%
  \includegraphics[width=0.33\textwidth, height=0.3\textwidth]{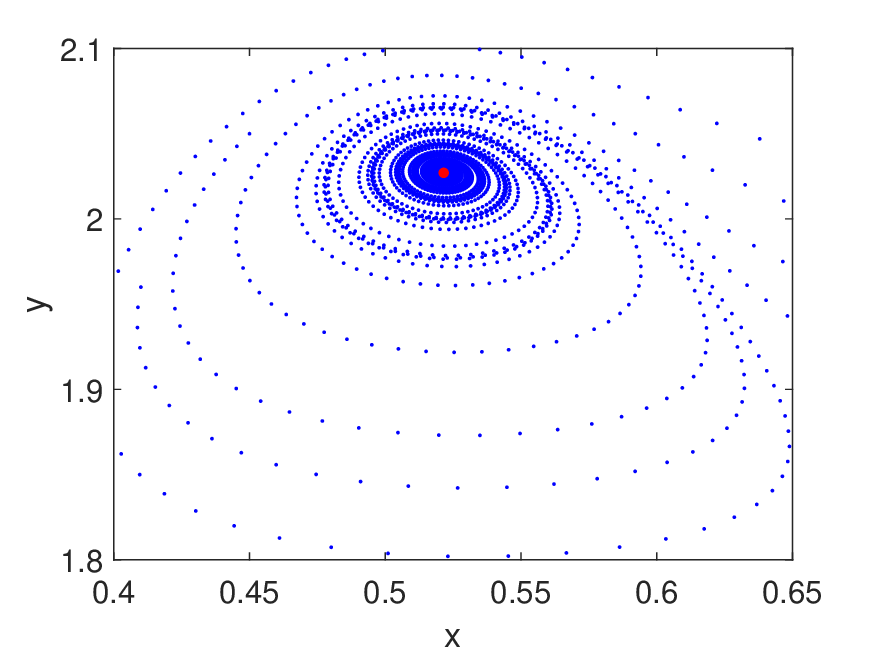}} 
 \caption{{\bf a.~}System \eqref{sys5} undergoes a Neimark-Sacker bifurcation when $s$ crosses the bifurcation point $s_{NS}$ for $s=0.0125$, $w=0.125$, $\alpha=8.4$ \& $\beta=1.3$ {~\bf b.~} $(0,0)$ is the only stable state for $\theta<\theta_{th}$ {~\bf c.~} $E_+$ is unstable and a stable limit cycle exists for $\theta_{th}<\theta=0.1253<\theta_{NS}$ {~\bf d.~} $E_+$ becomes stable for $\theta=\theta_{NS}=0.125642$ {~\bf e.~}$E_+$ is stable for $\theta=0.126>\theta_{NS}$}
 \label{theta_bif}
\end{figure}
\section{Conclusion}\label{sec_con}
In this article, we have proposed a two-dimensional discrete predator-prey model, which is suitable for analysing the dynamics of nonoverlapping generations of populations. The prey growth is limited by the carrying capacity of the environment and is also subjected to two component Allee effects; one is a weak Allee effect, while the other is a strong Allee effect. As mentioned earlier, the weak Allee effect may be associated with the measure of the non-fertile population in the species, and the strong Allee effect may be associated with the mate-finding Allee effect or reduced protection from predators during low population density.

Discussing the dynamical aspects of the populations, we found four possible fixed points when there is a strong Allee effect $(s>0)$ present in the system and three fixed points when both Allee effects are weak $(s<0)$, including the trivial fixed point $(0,0)$, one prey-only axial fixed point $(1,0)$ and one conditionally existent positive fixed point. The local stability of all fixed points is discussed, and the sufficient conditions for different stability natures attained by the fixed points are obtained. Also, we explored the region of stability for each fixed point in a two-dimensional parametric plane $(sw-plane)$. We observed that the system parameters undergo Neimark-Sacker bifurcations, which we illustrated numerically for all parameters. For $\beta$, we derived the normal form for the Neimark-Sacker bifurcation, using which the direction, amplitude and other characteristics of the emerging limit cycles can be obtained.

A notable characteristic of a system with a strong Allee effect in prey populations is its tendency to drive the populations to extinction when the prey falls below a critical level. However, numerical simulations reveal that population extinction can occur even when densities are well above the critical thresholds. This phenomenon arises due to the presence of multiple Allee effects in the prey population. For instance, in Fig. \ref{alpha_bif}, the amplitude of the limit cycle increases for decreasing values of $\alpha$ starting from $\alpha_{NS}=8.048817$. Despite a critical population density of $s=0.0125$ in the simulation, Fig. \ref{alpha_bif_xy} illustrates that the minimum value of $x$ in a limit cycle exceeds $0.2$. As a result, population extinction occurs even when the density is significantly higher than the critical level, leading to the conclusion that multiple Allee effects can substantially raise the critical threshold level.

\section*{Funding}
Not applicable.
\section*{Conflict of interest/Competing interests}
The authors declare that they have no conflict of interest.
\section*{Availability of data and material (data transparency)}
Not applicable
\section*{Code availability}
Not applicable.

\end{document}